\newcommand{\C}{\mathbb{C}}
\newcommand{\Z}{\mathbb{Z}}
\newcommand{\R}{\mathbb{R}}
\newcommand{\tr}{\text{tr}}
\newcommand{\Aff}{\text{Aff}^+}
\newtheorem{thm}{Theorem}[section]
\newtheorem{prop}[thm]{Proposition}
\newtheorem{lem}[thm]{Lemma}
\theoremstyle{definition}
\theoremstyle{remark}
\newtheorem{remark}[thm]{Remark}
\tikzset{mid/.style={
        decoration={markings,
            mark= at position 0.5 with {\arrow{#1}} ,
        },
        postaction={decorate}
    }
}
\tikzset{m1/.style={
        decoration={markings,
            mark= at position 0.25 with {\arrow{#1}} ,
        },
        postaction={decorate}
    }
}
\tikzset{m3/.style={
        decoration={markings,
            mark= at position 0.75 with {\arrow{#1}} ,
        },
        postaction={decorate}
    }
}
\begin{document}

\title{The realization problem for dilation surfaces}

\author{Jane Wang}
\address{Department of Mathematics,  Indiana University Bloomington, 831 E. 3rd St, Bloomington, IN 47401}
\email{wangjan@iu.edu}

\begin{abstract}
Dilation surfaces, or twisted quadratic differentials, are variants of translation surfaces. In this paper, we study the question of what elements or subgroups of the mapping class group can be realized as affine automorphisms of dilation surfaces. We show that dilation surfaces can have exotic Dehn twists in their affine automorphism groups and will establish that only certain types of mapping class group elements can arise as affine automorphisms of dilation surfaces. We also generalize a construction of Thurston that constructs a translation surface from a pair of filling multicurves to dilation surfaces. This construction will give us dilation surfaces that realize a pair of Dehn multitwists in their affine automorphism groups. 
\end{abstract} 
\maketitle

\section{Introduction} 
\label{sec:intro}

The main objects of study in this paper are \emph{dilation surfaces}, also known as \emph{twisted quadratic differentials}. There are a few ways of thinking about a dilation surface, including as a collection of polygons with sides identified by translation, dilation, and $180^\circ$ rotation. Dilation surfaces are geometric surfaces that locally look like Euclidean space except at finitely many cone points that have angle $\pi n$ around them. To each dilation surfaces, there is an associated holonomy representation $\rho : \pi_1(X) \rightarrow \R_+$ that records how the local Euclidean metric scales when moving around loops on the surface. Given this holonomy representation, we can also think of a dilation surfaces as a triple $(X, \rho, q)$ of a Riemann surface $X$, a holonomy representation $\rho$, and a quadratic differential $q$ twisted by the holonomy $\rho$. 

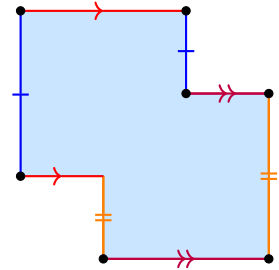
\begin{wrapfigure}{R}{0.25\textwidth}
	\begin{center}
	\begin{tikzpicture}[scale = 1.1]
	\draw[fill={rgb,255:red,202; green,229; blue,255}] (1,0) -- (3,0) -- (3,2) -- (2,2) -- (2,3) -- (0,3) -- (0,1) -- (1,1) -- (1,0); 
	
	\draw[red, thick,->] (0,3) -- (1,3);
	\draw[red, thick] (1,3) -- (2,3);
	
	\draw[blue, thick,-|] (2,3) -- (2,2.5);
	\draw[blue, thick] (2,2.5) -- (2,2);
	
	\draw[purple, thick] (2,2) -- (3,2);
	\draw[purple, thick,->] (2,2) -- (2.5,2);
	\draw[purple, thick,>-] (2.5,2) -- (3,2);
	
	\draw[orange, thick] (3,2) -- (3,0);
	\draw[orange, thick,-|] (3,2) -- (3,1.02);
	\draw[orange, thick,|-] (3,0.98) -- (3,0);
	
	\draw[purple, thick] (1,0) -- (3,0);
	\draw[purple, thick,->] (1,0) -- (2,0);
	\draw[purple, thick,>-] (2,0) -- (3,0);

	\draw[orange, thick] (1,0) -- (1,1);
	\draw[orange, thick,-|] (1,0) -- (1,0.48);
	\draw[orange, thick,|-] (1,0.52) -- (1,1);
	
	\draw[red, thick,->] (0,1) -- (0.5,1);
	\draw[red, thick] (0.5,1) -- (1,1);
	
	\draw[blue, thick,-|] (0,1) -- (0,2);
	\draw[blue, thick] (0,2) -- (0,3);
	\draw[fill] (1,0) circle [radius = 0.05]; 
	\draw[fill] (3,0) circle [radius = 0.05]; 
	\draw[fill] (3,2) circle [radius = 0.05]; 
	\draw[fill] (2,2) circle [radius = 0.05]; 
	\draw[fill] (2,3) circle [radius = 0.05]; 
	\draw[fill] (0,3) circle [radius = 0.05]; 
	\draw[fill] (0,1) circle [radius = 0.05]; 
	\end{tikzpicture}
	\end{center}
	\caption{A genus $2$ dilation surface with one cone point.}
\end{wrapfigure}

The focus of this paper will be to study the \emph{affine automorphism group} $\Aff(X, \rho, q)$ of a dilation surface $(X, \rho, q)$, as well as the stabilizer $\text{SL}(X, \rho, q)$ of the surface under the $\text{SL}(2,\mathbb{R})$ action, also known as the \emph{Veech group} of the surface. Dilation surfaces are relatively new objects of study, and as such, there is much still unknown about their geometry and their dynamics. Much of the current work on dilation surfaces is motivated by what is known about \emph{translation surfaces}, their well-studied untwisted counterparts. One reason we study the Veech groups of dilation surfaces is because understanding the Veech groups of translation surfaces has many applications to mathematical billiards as well as to the study of curves in the moduli space of Riemann surfaces. For example, there has been a lot of work done to find translation surfaces with Veech groups that are a lattice in $\text{SL}(2,\mathbb{R})$, since in this case the surface is known to have optimal dynamics. 

In contrast, very little is known about the geometry and dynamics of dilatin surfaces and there has been relatively little work done in understanding dilation surfaces and their affine automorphism groups. Perhaps surprisingly, it is not even known if a dilation surface with nontrivial holonomy can have a Veech group that is a lattice subgroup of $\text{SL}(2,\mathbb{R})$. \\

\noindent \textbf{The Realization Problem.} Partially motivated by this problem of determining whether dilation surfaces can have lattice Veech group, in this paper we will study the Veech groups of dilation surfaces and what mapping class group elements can arise as affine automorphisms of dilation surfaces. We will primarily be concerned with the following question, which we will call the realization problem. 

\textbf{Realization Problem:} What elements or subgroups of the mapping class group $\text{Mod}(\Sigma)$ can be realized in $\Aff(X, \rho, q)$ of some dilation surface? 

When we restrict our attention in the realization problem to individual elements of the mapping class group, we are asking the natural question of what topological maps can be realized as geometric maps on dilation surfaces. When we instead think about subgroups of the mapping class group, we can ask if finding large realizable subgroups gets us a step closer to finding dilation surfaces with lattice Veech group. 

As methods and results for translation surfaces sometimes motivate the study of similar problems for dilation surfaces, let us first survey what is known about the realization problem for translation surfaces. In particular, what mapping class group elements can be realized in $\Aff(X, q)$ for a translation surface $(X, q)$? It is well-known that the trace of an affine automorphism of a translation surfaces determines the type of the underlying topological map (e.g. see \cite{Mo}). If $\varphi \in \Aff(X, q)$ and $D\varphi$ is its derivative matrix, then $\varphi$ is finite order if and only if $D\varphi$ is elliptic and has $|\tr(D\varphi)| < 2$, $\varphi$ is pseudo-Anosov if and only if $D\varphi$ is hyperbolic and has $|\tr(D\varphi)| > 2$, and $\varphi^n$ is a Dehn multitwist for some $n$ if and only if $D \varphi$ is parabolic and has $|\tr(D\varphi)| = 2$. 

It is natural then to wonder if a similar result for dilation surfaces can help us to resolve the realization problem. It is here that we meet our first difficulty in solving the realization problem. While Dehn twists and multitwists $\varphi$ represented in the affine autmorphism group of translation surfaces always arise as twists around Euclidean cylinders on the surface and thus have that $|\tr(D\varphi)| = 2$, the same no longer holds for dilation surfaces. Dilation surfaces can have both \textit{standard Dehn multitwists}, Dehn multitwists around Euclidean cylinders, as well as \textit{exotic Dehn multitiwsts}, Dehn multitwists around affine cylinders whose core curves have holonomy, in their affine automorphism groups. 

\begin{restatable*}{thm}{dehn} 
	\label{thm:dehn}
	For every $g \geq 3$, there exists a dilation surface of genus $g$ with an exotic Dehn multitwist in its affine automorphism group. 
\end{restatable*}

These exotic Dehn multitwists $\varphi$ will have that $|\tr(D\varphi)| > 2$, disrupting the one-to-one correspondence that exists for the translation surfaces between what type of map $\varphi \in Aff(X, \rho, q)$ is topologically and whether $D\varphi$ is elliptic, parabolic, or hyperbolic. 

Since the trace of an affine automorphism of a translation surface determines what type of map it is, we can see that reducible mapping class group elements that exhibit mixed behavior (e.g. being pseudo-Anosov on one part of the surface and a Dehn twist on another) cannot be represented in $\Aff(X,q)$ of any translation surface. With the existence of exotic Dehn multitwists on dilation surfaces having the same trace behavior as pseudo-Anosov maps, it is no longer clear if mixed behavior reducible mapping class group elements can be represented in $\Aff(X, \rho, q)$ for dilation surfaces. Nevertheless, we can show with different techniques that a similar classification of representable mapping class group elements as for translation surfaces also exists for dilation surfaces. 

\begin{restatable*}{thm}{classification} 
	\label{thm:classification_dilation}
	Let $\varphi$ is in $\Aff(X, \rho, q)$ and let $D\varphi \in \text{SL}(2,\mathbb{R})$. If $D\varphi$ is elliptic, then $\varphi$ is finite order. If $D \varphi$ is parabolic, then some power $\varphi^n$ of $\varphi$ is a Dehn multitiwst. If $D \varphi$ is hyperbolic, then $\varphi$ is either pseudo-Anosov or some power $\varphi^n$ of $\varphi$ is a Dehn multitwist. 
\end{restatable*}

This theorem gives us necessary conditions for a mapping class group element to be realizable in $\Aff(X, \rho, q)$ for some dilation surfaces. We will also address sufficient conditions for finite order and pseudo-Anosov mapping class group elements to be realizable in $\Aff(X, \rho, q)$ for some dilation surface after fixing the holonomy $\rho$. 

Having mostly resolved the realization problem for individual elements of the mapping class group, we can then move on to the realization problem for subgroups of the mapping class group. The next result is a generalization of the Thurston construction for constructing pseudo-Anosov maps on translation surfaces from pairs of filling multicurves on a surface (see \cite{FM}, Section 14.1 for a reference for this construction). It also gives us dilation surfaces with large subgroups of $\text{SL}(2,\mathbb{R})$ as their Veech group. Specifically, from this construction we obtain dilation surfaces with $\Z*\Z$, the free group on two generators, as a subgroup of the Veech group. 

First, we present a few definitions. We say that a collection of not necessarily disjoint simple closed curves \emph{fills} a surface, if the complement of these curves is a collection of topological disks. A multicurve $\gamma = \{\gamma_i\}$ is a collection of disjoint simple closed curves, up to isotopy, and a Dehn multitwist $T_\gamma$ is the product of Dehn twists along each $\gamma_i$. Then, we have the following theorem.

\begin{restatable*}{thm}{thurston}
	\label{thm:thurston}
	
	Given Dehn multitwists $T_\alpha$ and $T_\beta$ along multicurves $\alpha = \{\alpha_i\}$ and $\beta = \{\beta_i\}$ such that $\alpha \cup \beta$ fill the surface, if $\rho : \pi_1(\Sigma) \rightarrow \R_+$ is trivial on each $\alpha_i$ and $\beta_i$, then there exists a dilation surface $(X, \rho, q)$ with $T_\alpha$ and $T_\beta$ represented in $\Aff(X, \rho, q)$ as standard Dehn multitwists. 
\end{restatable*}

A direct consequence of this theorem is that for pseudo-Anosov maps $\psi$ resulting from the product of the Dehn multitwists $T_\alpha$ and $T_\beta$, this gives another way of constructing dilation surfaces with $\psi$ represented in $\Aff(X, \rho, q)$. This process is also an example of a more general construction that produces dilation surfaces and twisted quadratic differentials from pairs of filling multicurves. 

A natural question to ask here is whether one can find dilation surfaces with nontrivial holonomy and lattice Veech groups with this construction. Unfortunately, we will see that the answer is no, or at least not easily. The elements $D(T_\alpha)$ and $D(T_\beta)$ found with this construction will never be enough to generate a lattice subgroup of $\text{SL}(2,\mathbb{R})$. However, there is still hope that one of the dilation surfaces $(X, \rho, q)$ found via this construction could nevertheless have a lattice Veech group once we include all of the elements of $\text{SL}(X, \rho, q)$ and not just $D(T_\alpha)$ and $D(T_\beta)$. 

\textbf{Related references.} We comment here that there has been much recent interest in dilation surfaces. Most closely related to this paper is a paper by Duryev, Fourgeron, and Ghazouani (\cite{DFG}) that studies the Veech groups of dilation surfaces. Various other authors have also analyzed the dynamics of the straight-line flow on dilation surfaces (\cite{BFG}, \cite{BG}, \cite{BS}, \cite{G}, \cite{HW}). There have also been older works studying objects related to dilation surfaces. In \cite{V4}, Veech studied spaces of complex affine surfaces, which are cousins to dilation surfaces. The directional foliations of dilation surfaces are twisted measured foliations, also called affine foliations or laminations. Hatcher and Oertel study the space of twisted measured foliations in \cite{HO}. There is also a connection between dilation surfaces and affine interval exchange maps. The latter topic is studied by various authors including Marmi, Moussa, and Yoccoz in \cite{MMY}.

\textbf{Structure of the paper.} In Section \ref{sec:background}, we will cover background material dilation surfaces and their affine automorphism groups and Veech groups. The statement of the realization problem is then given in Section \ref{sec:realization}. We will also give an overview of the situation for genus $1$ surfaces before showing the existence of exotic Dehn twists in higher genus and proving that the types of mapping class group elements that can be realized in $\Aff(X, \rho, q)$ are limited. Then, in Section \ref{sec:elements}, we will see that we can always realize certain mapping class group elements as affine automorphism and in Section \ref{sec:multicurves}, we will show by generalizing a construction of Thurston that some special subgroups of the mapping class group generated by Dehn multitwists can be realized in $\Aff(X, \rho, q)$. 

\section{Background}
\label{sec:background}

\subsection{From translation surfaces to dilation surfaces} 

The objects of study in this paper, dilation surfaces, are in a way a generalization of translation surfaces, their more well-studied cousins. A \textbf{translation surface} can be defined geometrically as a collection of polygons with sides identified in parallel opposite pairs by translation, up to cut and paste equivalence. Equivalently, they can be defined as a pair $(X, \omega)$ where $\omega$ is a holomorphic one-form (a section of $K_X$, the cotangent bundle) on the Riemann surface $X$, and have moduli spaces called \textbf{strata} that are related to the moduli spaces of Riemann surfaces. A variant that people study are half-translation surfaces, which are a pair $(X, q)$ where $q$ is a quadratic differential (a meromorphic section of the cotangent bundle squared with at most simple poles). Geometrically, half-translation surfaces can be defined as a collection of polygons with sides identified by translation and rotation by $180$ degrees, up to cut and paste equivalence. 

Translation surfaces and half-translation surfaces have a flat geometry away from finitely many cone points, a flat metric coming from the holomorphic one-form or quadratic differential, and a directional flow or foliation  on the surface (up to $180^\circ$ rotation for half-translation surfaces). Many interesting dynamical questions can be asked and answered about the directional flow on a translation surface. Some of these questions involve either the \textbf{saddle connections} (straight line trajectories connecting two cone points) or \textbf{separatrices} (straight line trajectories emanating from a cone point) of the surface. Translation surfaces also arise naturally as useful tools in the study of mathematical billiards and have been used successfully to show, for example, that billiards in certain polygonal tables have optimal dynamics (\cite{V3}) and to make progress on the illumination problem (\cite{LMW}). 

There is also interesting dynamics on the strata of translation surfaces. There is a natural $\text{SL}(2,\mathbb{R})$ action on each stratum, and the closures of the orbits of this action are known to be algebraic varieties. The \textbf{Veech group} is the stabilizer of the translation surface under the $\text{SL}(2,\mathbb{R})$ action and
 sometimes reveals properties of the straight-line flow on the underlying flat surface. In the Teichm\"{u}ller space of Riemann surfaces, the $\text{SL}(2,\mathbb{R})$ orbits of translation surfaces generate isometrically immersed hyperbolic disks. The study of translation surfaces, their dynamics, and their moduli space is a rich and active area of current research that both draws on tools from many different fields and offers a different perspective to study areas ranging from dynamics, to algebraic geometry and number theory, to geometry and topology. For a more extensive introduction to translation surfaces, see \cite{Wr} or \cite{Z}. 

Dilation surfaces are the broadest class of objects that maintains the key properties of translation surfaces including the directional flow on the surface and the $\text{SL}(2,\mathbb{R})$ action. By broadening the field of study to dilation surfaces, we see new geometric and dynamical behaviors that do not occur for translation surfaces.

Like translation surfaces, \textbf{dilation surfaces} can be defined in two ways: 

\begin{enumerate} 
	\item[(1)] A triple $(X, \rho, q)$, where $X$ is a Riemann surface and $\rho : \pi_1(X) \rightarrow \R_+$ is a \textbf{holonomy representation}. If we define $L_\rho$ as a flat complex line bundle over $X$ with holonomy $\rho$, then $q$ is a meromorphic section of $K_X^2 \otimes L_\rho^2$ with at most simple poles, where $K_X$ is the cotangent bundle. We call $q$ a \textbf{twisted quadratic differential}.
	
	\item[(2)] A collection of polygons with sides identified in parallel opposite pairs by maps in the group $\{f(z) = az + b : a \in \R_{\neq 0}, b \in \C\}$, such that the holonomy around each cone point is trivial. 
\end{enumerate}

We will take a more in-depth look at these definitions after the following remark and example.  

\begin{remark} We remark that there is some discrepancy in the literature on the definition of a dilation surface. Some authors prefer to define dilation surfaces with twisted holomorphic one-forms rather than quadratic differentials. Some other authors do not mandate that dilation surfaces have trivial holonomy around singularities. We choose to work with twisted quadratic differentials because the dimension vector space of such differentials $QD_\rho(X)$ is only dependent on genus, which is not true for twisted holomorphic one-forms. One can see this via a Riemann Roch computation. As for the holonomy condition, we impose trivial holonomy around cone points so that the associated twisted measured foliations and laminations coming from the directional foliations on the surface remain locally nice and so cone points correspond to zeros or poles of $q$. 
\end{remark}

Figure \ref{fig:genus2} shows an example of a genus $2$ dilation surface. Sides with identical markings are identified by translation and dilation. We notice that the vertices are all identified to one point that is a cone point of cone angle $6 \pi$, and that every other point on the surface has a locally flat neighborhood. If we travel around loops on the surface, lengths and areas scale by some holonomy. However there is trivial holonomy around small loops around cone points. 

\begin{figure}[ht]
	\begin{center}
		\begin{tikzpicture}
		\draw[fill={rgb,255:red,202; green,229; blue,255}] (1,0) -- (3,0) -- (3,2) -- (2,2) -- (2,3) -- (0,3) -- (0,1) -- (1,1) -- (1,0); 
		
		\draw[red, thick,->] (0,3) -- (1,3);
		\draw[red, thick] (1,3) -- (2,3);
		
		\draw[blue, thick,-|] (2,3) -- (2,2.5);
		\draw[blue, thick] (2,2.5) -- (2,2);
		
		\draw[purple, thick] (2,2) -- (3,2);
		\draw[purple, thick,->] (2,2) -- (2.5,2);
		\draw[purple, thick,>-] (2.5,2) -- (3,2);
		
		\draw[orange, thick] (3,2) -- (3,0);
		\draw[orange, thick,-|] (3,2) -- (3,1.02);
		\draw[orange, thick,|-] (3,0.98) -- (3,0);
		
		\draw[purple, thick] (1,0) -- (3,0);
		\draw[purple, thick,->] (1,0) -- (2,0);
		\draw[purple, thick,>-] (2,0) -- (3,0);

		\draw[orange, thick] (1,0) -- (1,1);
		\draw[orange, thick,-|] (1,0) -- (1,0.48);
		\draw[orange, thick,|-] (1,0.52) -- (1,1);
		
		\draw[red, thick,->] (0,1) -- (0.5,1);
		\draw[red, thick] (0.5,1) -- (1,1);
		
		\draw[blue, thick,-|] (0,1) -- (0,2);
		\draw[blue, thick] (0,2) -- (0,3);
		\draw[fill] (1,0) circle [radius = 0.05]; 
		\draw[fill] (3,0) circle [radius = 0.05]; 
		\draw[fill] (3,2) circle [radius = 0.05]; 
		\draw[fill] (2,2) circle [radius = 0.05]; 
		\draw[fill] (2,3) circle [radius = 0.05]; 
		\draw[fill] (0,3) circle [radius = 0.05]; 
		\draw[fill] (0,1) circle [radius = 0.05]; 
		\end{tikzpicture}
	\caption{A genus $2$ dilation surfaces with one cone point of cone angle $2\pi$.}
	\label{fig:genus2}
	\end{center}
\end{figure}
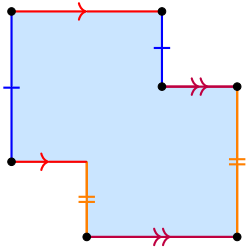

Let us examine the objects in the above definition  of a dilation surface more in depth. On a dilation surface, lengths and areas cannot be globally defined. The map $\rho : \pi_1(X) \rightarrow \R_+$ is called the holonomy representation and will dictate how lengths and areas change as one travels around loops on the surface. We can then construct the bundle $L_\rho$ to be a line bundle that encodes the twisting data of $\rho$. To do this, we let $\widetilde{X}$ be the universal cover of $X$. There is an action of $\pi_1(X)$ on both $\widetilde{X}$ and $\C$. If $\gamma \in \pi_1(X)$, then $\gamma$ acts on $\widetilde{X}$ by the corresponding deck transformation and on $\C$ by multiplication by $\rho(\gamma)$. Then, we can define $L_\rho$ as $\widetilde{X} \times \C / \sim$ where the equivalence $\sim$ is defined by $( x, z) \sim (\gamma(x), \rho(\gamma) z)$. 

Given a meromorphic section $q$ of $K_X^2 \otimes L_\rho^2$ with zeros of arbitrary order but at most simple poles, we can see that away from the zeros and poles, $q$ locally defines a flat structure given by the coordinate $z = \int q^{1/2}$. This coordinate is well-defined up to translation, rotation by $180$ degrees, and dilation by nonzero real factors. Around the zeros of order $n$ of $q$, we see cone angles of $(n+2)\pi$. Around poles of order $1$, we see a cone angle of $\pi$. Piecing together these charts, we imbue the underlying surface with a local flat geometry. It takes a little more work to realize that we can triangulate the surface and therefore decompose it into polygons with sides identified by maps in $\{f(z) = az + b: a \in \R_{\neq 0}, b \in \C\}$. Conversely, given such a collection of polygons in $\C$, we can locally define the quadratic differential $dz^2$ coming from the $z$ coordinate in $\C$. These local forms piece together to form a holomorphic section of $K_X^2 \otimes L_\rho^2$.

A simple example of a dilation surface where we can easily see that associated twisted quadratic differential is a \textbf{Hopf torus}. Specifically, we will start with $\C^*$ and the quadratic differential $dz^2$ and quotient out by the action of $z \mapsto 2z$, to get an annulus with the outer ring identified to the inner ring by a dilation. 
	
	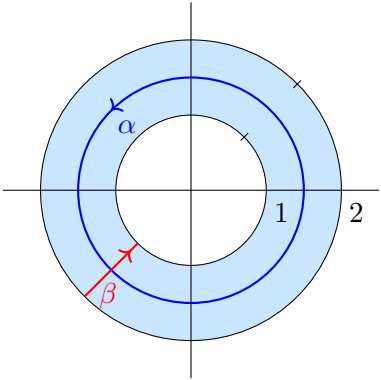
\begin{figure}[ht]
	\begin{center} 
		\begin{tikzpicture}
		\draw[fill = {rgb,255:red,202; green,229; blue,255}] (0,0) circle [radius=2];
		\draw[fill = white] (0,0) circle [radius=1];
		\draw (-2.5,0) -- (2.5,0);
		\draw (0,-2.5) -- (0,2.5); 
		\node at (1.2, -0.3) {$1$};
		\node at (2.2, -0.3) {$2$};
		\draw (0.66, 0.66) -- (.76, .76); 
		\draw (1.36, 1.36) -- (1.46, 1.46); 
		
		\draw[thick, color = blue, 
		decoration={markings, mark=at position 0.375 with {\arrow{>}}},
		postaction={decorate} 
		] (0,0) circle [radius = 1.5];
		\draw[thick, color = red, decoration={markings, mark=at position 0.25 with {\arrow{<}}},
		postaction={decorate} ] (-0.71,-0.71) -- (-1.41, -1.41); 
		
		\draw[color = blue] node at (-0.85, 0.85) {$\alpha$};
		\draw[color = red] node at (-1.1, -1.4) {$\beta$};
		\end{tikzpicture} 
	
	\end{center} 
	\caption{A Hopf torus. The outer and inner rings of the annulus are identified by a $\times 2$ map.}
	\label{fig:hopf}
	\end{figure}
	
	If $\alpha$ and $\beta$ are generators of $\pi_1(X)$ as shown in Figure \ref{fig:hopf}, then $\rho (\alpha ) = 1$ and $\rho(\beta) = 2$. The dilation surface pictured then corresponds to the Riemann surface that is the annulus with the outer and inner boundaries identified, with the section of $K_X^2 \otimes L_\rho^2$ coming from $dz^2$ on $\C^*$. We notice that if we start with a form that is $dz$ in local coordinates on the surface and follow it one time around $\beta$, the way that the sides are identified by scaling also scales the form by $2$. 
	
We would also like to understand when two dilation surfaces defined via either definition are equivalent. 

\begin{enumerate}
	\item Two triples $(X, \rho, q)$ and $(X^\prime, \rho^\prime, q^\prime)$ define the same dilation surface if there exists a holomorphic map $f : X \rightarrow X^\prime$ such that $f^* (\rho^\prime) = \rho$, and there exists a lift $F : L_\rho \xrightarrow{\cong} L_{\rho^\prime}$ that preserves the real subspaces of fibers $\R \subset \C$ such that $f^* (q^\prime) = q$. 
	\item Two collections of polygons define the same dilation surface if they are the same up to translation, rotation by $180^\circ$, dilation, and cut and paste moves. 
\end{enumerate}

For example, the three dilation surfaces in Figure \ref{fig:equivalent} are equivalent. 
	
	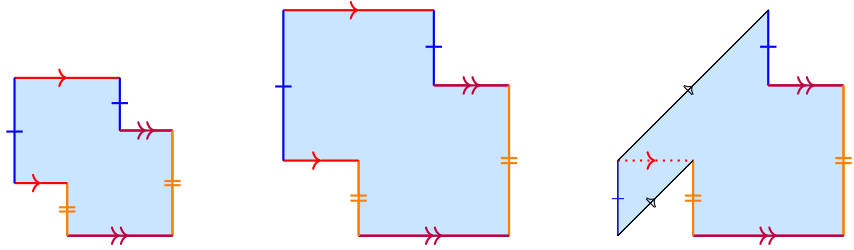
\begin{figure}[ht]
	\begin{center}
		\begin{tikzpicture}[scale = 0.7]
		\draw[fill={rgb,255:red,202; green,229; blue,255}] (1,0) -- (3,0) -- (3,2) -- (2,2) -- (2,3) -- (0,3) -- (0,1) -- (1,1) -- (1,0); 
		
		\draw[red, thick,->] (0,3) -- (1,3);
		\draw[red, thick] (1,3) -- (2,3);
		
		\draw[blue, thick,-|] (2,3) -- (2,2.5);
		\draw[blue, thick] (2,2.5) -- (2,2);
		
		\draw[purple, thick] (2,2) -- (3,2);
		\draw[purple, thick,->] (2,2) -- (2.5,2);
		\draw[purple, thick,>-] (2.5,2) -- (3,2);
		
		\draw[orange, thick] (3,2) -- (3,0);
		\draw[orange, thick,-|] (3,2) -- (3,1.02);
		\draw[orange, thick,|-] (3,0.98) -- (3,0);
		
		\draw[purple, thick] (1,0) -- (3,0);
		\draw[purple, thick,->] (1,0) -- (2,0);
		\draw[purple, thick,>-] (2,0) -- (3,0);

		\draw[orange, thick] (1,0) -- (1,1);
		\draw[orange, thick,-|] (1,0) -- (1,0.48);
		\draw[orange, thick,|-] (1,0.52) -- (1,1);
		
		\draw[red, thick,->] (0,1) -- (0.5,1);
		\draw[red, thick] (0.5,1) -- (1,1);
		
		\draw[blue, thick,-|] (0,1) -- (0,2);
		\draw[blue, thick] (0,2) -- (0,3);
		
		\end{tikzpicture}
		\hspace{1cm}
		\begin{tikzpicture}
		\draw[fill={rgb,255:red,202; green,229; blue,255}] (1,0) -- (3,0) -- (3,2) -- (2,2) -- (2,3) -- (0,3) -- (0,1) -- (1,1) -- (1,0); 
		
		\draw[red, thick,->] (0,3) -- (1,3);
		\draw[red, thick] (1,3) -- (2,3);
		
		\draw[blue, thick,-|] (2,3) -- (2,2.5);
		\draw[blue, thick] (2,2.5) -- (2,2);
		
		\draw[purple, thick] (2,2) -- (3,2);
		\draw[purple, thick,->] (2,2) -- (2.5,2);
		\draw[purple, thick,>-] (2.5,2) -- (3,2);
		
		\draw[orange, thick] (3,2) -- (3,0);
		\draw[orange, thick,-|] (3,2) -- (3,1.02);
		\draw[orange, thick,|-] (3,0.98) -- (3,0);
		
		\draw[purple, thick] (1,0) -- (3,0);
		\draw[purple, thick,->] (1,0) -- (2,0);
		\draw[purple, thick,>-] (2,0) -- (3,0);

		\draw[orange, thick] (1,0) -- (1,1);
		\draw[orange, thick,-|] (1,0) -- (1,0.48);
		\draw[orange, thick,|-] (1,0.52) -- (1,1);
		
		\draw[red, thick,->] (0,1) -- (0.5,1);
		\draw[red, thick] (0.5,1) -- (1,1);
		
		\draw[blue, thick,-|] (0,1) -- (0,2);
		\draw[blue, thick] (0,2) -- (0,3);
		
		\end{tikzpicture}
		\hspace{1cm}
		\begin{tikzpicture}
		\draw[fill={rgb,255:red,202; green,229; blue,255}] (0,0) -- (1,1) -- (1,0) -- (3,0) -- (3,2) -- (2,2) -- (2,3) -- (0,1) -- (0,0); 
		
		\draw[blue, thick,-|] (2,3) -- (2,2.5);
		\draw[blue, thick] (2,2.5) -- (2,2);
		
		\draw[purple, thick] (2,2) -- (3,2);
		\draw[purple, thick,->] (2,2) -- (2.5,2);
		\draw[purple, thick,>-] (2.5,2) -- (3,2);
		
		\draw[orange, thick] (3,2) -- (3,0);
		\draw[orange, thick,-|] (3,2) -- (3,1.02);
		\draw[orange, thick,|-] (3,0.98) -- (3,0);
		
		\draw[purple, thick] (1,0) -- (3,0);
		\draw[purple, thick,->] (1,0) -- (2,0);
		\draw[purple, thick,>-] (2,0) -- (3,0);

		\draw[orange, thick] (1,0) -- (1,1);
		\draw[orange, thick,-|] (1,0) -- (1,0.48);
		\draw[orange, thick,|-] (1,0.52) -- (1,1);
		
		\draw[red, thick,->,dotted] (0,1) -- (0.5,1);
		\draw[red, thick, dotted] (0.5,1) -- (1,1);

		\draw[blue, mid = {|}] (0,0) -- (0,1); 
		\draw[black, mid = {|>}] (0,0) -- (1,1); 
		\draw[black, mid = {|>}] (0,1) -- (2,3); 
		
		\end{tikzpicture}
	\end{center} 
	\caption{Three equivalent dilation surfaces. The surfaces are related to each other by translation, dilation, and cut and paste operations.}
	\label{fig:equivalent}
	\end{figure}

Another construction that will be important in this paper is the \textbf{slit construction}, a connect sum operation of two dilation surfaces to create another dilation surface. In this construction, we start with two dilation surfaces and cut a slit in each surface in the same direction. Then, we connect sum the two surfaces together by gluing one side of each slit to the opposite side of the slit on the other surface. 
	
	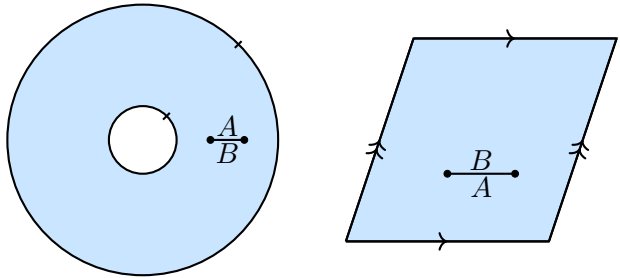
\begin{figure}[ht]
	\begin{center}
		\begin{tikzpicture}[scale = 0.9]
		\draw[thick, fill = {rgb,255:red,202; green,229; blue,255}] (0,0) circle [radius=2];
		\draw[thick, fill = white] (0,0) circle [radius=0.5];
		\draw[thick] (0.3, 0.3) -- (.4, .4); 
		\draw[thick] (1.36, 1.36) -- (1.46, 1.46); 
		
		\draw[thick] (1,0) -- (1.5,0); 
		\draw[fill] (1,0) circle [radius = 0.05]; 
		\draw[fill] (1.5,0) circle [radius = 0.05]; 
		\node at (1.25, 0.2) {$A$}; 
		\node at (1.25, -0.2) {$B$}; 
		
		\draw[thick, fill = {rgb,255:red,202; green,229; blue,255}] (3,-1.5) -- (6,-1.5) -- (7,1.5) -- (4,1.5) -- (3, -1.5); 
		\draw[thick, mid = {>}] (3,-1.5) -- (6,-1.5);
		\draw[thick, mid = {>}] (4,1.5) -- (7,1.5);
		\draw[thick, mid = {>>}] (3,-1.5) -- (4,1.5); 
		\draw[thick, mid = {>>}] (6,-1.5) -- (7,1.5); 
		
		\draw[thick] (4.5,-0.5) -- (5.5,-0.5); 
		\draw[fill] (4.5, -0.5) circle [radius = 0.05]; 
		\draw[fill] (5.5, -0.5) circle [radius = 0.05]; 
		\node at (5, -0.3) {$B$}; 
		\node at (5, -0.7) {$A$}; 
		\end{tikzpicture} 
	\end{center}
	\caption{The slit construction gives us a genus $2$ dilation surface as the connect sum of two genus $1$ dilation surfaces. Sides labeled by the same letter are identified.}
	\label{fig:slit}
	\end{figure}

	In Figure \ref{fig:slit}, slits are cut in two genus one dilation surfaces. After identifications, we get a genus two dilation surfaces with two cone points of angle $4\pi$ each. In general, when each slit is drawn so that it does not intersect any existing cone points on the surface, the slit construction takes two genus $g_1$ and $g_2$ surfaces and produces a genus $g_1+g_2$ surface with two extra cone points of angle $4\pi$ each. 

\subsection{The mapping class group}
\label{subsec:mcg}

In this paper, we will be interested in the interplay between affine automorphisms on dilation surfaces, which are maps that respect the geometry of the surface, and mapping class group elements, which are topological matps. We briefly recall here some key facts about the mapping class group of a surface. For more details, please see \cite{FM}. 

The \textbf{mapping class group} of a topological surface $\Sigma$, often denoted as $\text{Mod}(\Sigma)$, is the group of homeomorphisms of $\Sigma$ up to isotopy. Sometimes, we will restrict our attention only to the orientation-preserving elements in $\text{Mod}(\Sigma)$. An important set of elements in the mapping class group are Dehn twists, in part because they generate $\text{Mod}(\Sigma)$. A \textbf{Dehn twist} is a twist around a simple closed curve on the surface. More specifically, if $T_\alpha$ is a Dehn twist around a curve $\alpha$ on a surface $\Sigma$, then (up to isotopy) there is a tubular neighborhood of $\alpha$ that is homoeomorphic to  $S^1 \times [0,1]$ for which $T_\alpha (\theta, t) = (\theta + 2 \pi t, t)$. Away from this tubular neighborhood, $T_\alpha$ is the identity. \textbf{Dehn multitwists} are generalizations of Dehn twists where now a single map $T$ can consist of multiple Dehn twists or powers of Dehn twists around disjoint simple closed curves. 

Another important fact about the mapping class group of a surface is the Nielson-Thurston classification for mapping class group elements. 

\begin{thm}[Nielson-Thurston classification] If $f$ is a homeomorphism of a closed genus $g \geq 2$ surface, then $f$ is isotopic to a homeomorphism $h$ of the surface such that $h$ is periodic, reducible, or pseudo-Anosov. 
\end{thm}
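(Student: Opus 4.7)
The plan is to follow Bers' Teichm\"{u}ller-theoretic approach. Let $\mathcal{T}(\Sigma)$ denote the Teichm\"{u}ller space of marked hyperbolic structures on $\Sigma$, equipped with the Teichm\"{u}ller metric $d_T$, on which $\text{Mod}(\Sigma)$ acts properly discontinuously by isometries. For $\varphi = [f] \in \text{Mod}(\Sigma)$, define the translation length
\[
\ell(\varphi) \;=\; \inf_{X \in \mathcal{T}(\Sigma)} d_T(X, \varphi \cdot X).
\]
I would split into cases according to whether this infimum is attained or not, and, among the attained cases, whether $\ell(\varphi) = 0$ or $\ell(\varphi) > 0$. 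The trichotomy periodic / pseudo-Anosov / reducible will match these cases directly.

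First I would handle the case where the infimum is attained at some $X \in \mathcal{T}(\Sigma)$. If $\ell(\varphi) = 0$, then $\varphi \cdot X = X$, so $f$ is isotopic to an isometry of the hyperbolic surface $X$. Since closed hyperbolic surfaces of genus $g \geq 2$ have finite isometry group (by Hurwitz's $84(g-1)$ bound), a power of $\varphi$ is trivial and $\varphi$ admits a periodic representative. If $\ell(\varphi) > 0$, Bers' axis lemma provides a Teichm\"{u}ller geodesic through $X$ on which $\varphi$ acts by translation by $\ell(\varphi)$. Such a geodesic is generated by a holomorphic quadratic differential, and its horizontal and vertical foliations form a pair of transverse measured foliations preserved by $f$, one stretched by $e^{\ell(\varphi)}$ and the other contracted by $e^{-\ell(\varphi)}$. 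This is precisely the defining data of a pseudo-Anosov representative $h$.

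The remaining case, where the infimum is not attained, is the heart of the argument. Pick a minimizing sequence $X_n \in \mathcal{T}(\Sigma)$ with $d_T(X_n, \varphi \cdot X_n) \to \ell(\varphi)$. Because $\text{Mod}(\Sigma)$ acts properly on $\mathcal{T}(\Sigma)$, the projections of $X_n$ to the moduli space $\mathcal{M}(\Sigma)$ must leave every compact set; Mumford's compactness criterion then forces the hyperbolic systole of $X_n$ to tend to zero. Let $\gamma_n$ be a systolic geodesic on $X_n$. The Teichm\"{u}ller distance controls the ratio of extremal lengths of any fixed curve under $\varphi$, so if $\gamma_n$ is very short on $X_n$, then $\varphi(\gamma_n)$ is comparably short on $\varphi \cdot X_n$ and hence on $X_n$ up to a bounded factor. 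A finiteness argument (only finitely many isotopy classes of curves fall below a fixed Margulis constant on a hyperbolic surface) lets us pass to a subsequence on which a fixed multicurve is simultaneously short, and a second pass produces a $\varphi$-invariant essential multicurve, giving a reducible representative.

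The main obstacle is this last step: extracting a genuinely $\varphi$-invariant multicurve from the pinching data. The delicate point is ruling out the scenario in which different subsequences pinch incompatible curves; this is controlled via Kerckhoff's formula comparing $d_T$ to ratios of extremal lengths, which guarantees that the full collection of short curves on $X_n$ is sent to a comparable collection on $\varphi \cdot X_n$, forcing $\varphi$ to permute a limiting system of isotopy classes. With this in hand the three cases cover all of $\text{Mod}(\Sigma)$, yielding the Nielsen--Thurston trichotomy.
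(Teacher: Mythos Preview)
The paper does not prove this theorem at all: it is stated in Section~\ref{subsec:mcg} purely as background, with the reader referred to \cite{FM} for details. So there is no ``paper's own proof'' to compare against; you have supplied a proof where the paper supplies none.

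That said, your outline is the standard Bers argument and is essentially sound. A couple of remarks on the sketch itself. In the realized case with $\ell(\varphi)>0$, you should be explicit that Teichm\"uller's existence and uniqueness theorems are what produce the extremal quadratic differential on $X$, and that $\varphi$-invariance of the geodesic forces $\varphi$ to pull back this differential to a scalar multiple of itself; this is what yields the pair of invariant measured foliations. In the non-realized case, your extraction of a $\varphi$-invariant multicurve is the genuinely delicate step, and the phrase ``a second pass produces a $\varphi$-invariant essential multicurve'' hides real work: one typically argues via Wolpert's or Kerckhoff's length-ratio inequality that the set of curves shorter than the Margulis constant on $X_n$ is, for large $n$, permuted by $\varphi$ as a set of isotopy classes, and then uses that this set stabilizes along a subsequence. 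As written your sketch acknowledges this obstacle but does not quite close it; a reader would want either the explicit inequality $\tfrac{1}{K}\,\mathrm{Ext}_{X}(\gamma) \le \mathrm{Ext}_{\varphi X}(\gamma) \le K\,\mathrm{Ext}_{X}(\gamma)$ with $K = e^{2 d_T(X,\varphi X)}$ and the collar lemma, or a citation.
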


A map $h : S \rightarrow S$ is \textbf{periodic} if some power of $h$ is the identity map. $h$ is said to be \textbf{reducible} if it preserves a nonempty set of disjoint simple closed curves. Finally, $h$ is said to be \textbf{pseudo-Anosov} if there exists a pair of transverse measured foliations $(\mathcal{F}_u, \mu_u)$ and $(\mathcal{F}_s, \mu_s)$ on the surface and a $\lambda > 1$ for which $$h \cdot (\mathcal{F}_u, \mu_u) = (\mathcal{F}_u, \lambda \mu_u) \text{ and } h \cdot (\mathcal{F}_s, \mu_s) = (\mathcal{F}_s, \lambda^{-1} \mu_s).$$ The measured foliations $(\mathcal{F}_u, \mu_u)$ and $(\mathcal{F}_s, \mu_s)$ are often referred to as the \textbf{unstable foliation} and the \textbf{stable foliation} respectively. The idea of a pseudo-Anosov map is that way from finitely many points, the map locally looks like the map $(x,y) \mapsto (\lambda x, \lambda^{-1} y)$ for some $\lambda > 1$. 

\subsection{Affine automorphisms and Veech groups} 
\label{subsec:aff} 

Let us now examine the geometric side of things. A main object of interest for us will be the Veech groups and affine automorphism groups of dilation surfaces.  

The group of \textbf{affine automorphisms} of a dilation surface $(X, \rho, q)$, denoted by $\Aff(X, \rho, q)$, is the group of orientation-preserving homeomorphisms $f : X \rightarrow X$ that are locally real affine in the coordinates given by the twisted quadratic differential $q$, away from the singularities of $q$. These elements can also be though of as orientation-preserving real affine maps of the universal cover of $(X, \rho, q)$, up to deck transformations. We see that while a lift $\tilde{f}$ has well-defined derivative $D(\tilde{f})$, the derivative $D(f)$ is only well-defined up to scale and so is an element of $\text{PGL}_2^+\R \cong \text{PSL}(2, \mathbb{R})$. We will call the image of $\Aff(X, \rho, q)$ in $\text{PSL}(2, \mathbb{R})$ the \textbf{Veech group} of the surface and denote is $\text{SL}(X, \rho, q)$. We can also think of this group as the stabilizer of the surface under the action of $\text{PSL}(2, \mathbb{R})$.

In this paper, we will be concerned with the question of when a mapping class group element can be represented in $\Aff(X, \rho, q)$ for some dilation surface. This is intimately connected to the projection map $\Aff(X, \rho, q) \rightarrow \text{Mod}(\Sigma)$, where $\text{Mod}(\Sigma)$ denotes the mapping class group of the underlying topological surface $\Sigma$ to the Riemann surface $X$. We also make the following observation. 

\begin{prop} The projection map $\Aff(X, \rho, q) \rightarrow \text{Mod}(\Sigma)$ is not necessarily injective. 
\end{prop}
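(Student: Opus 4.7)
The plan is to exhibit an explicit example, namely a Hopf torus, on which there is a nontrivial one-parameter family of affine automorphisms all of which are isotopic to the identity. This is the first place one would look because on a Hopf torus the holonomy is a proper dilation, so the deck group action does not fill up all of $\C^*$, leaving room for extra affine symmetries that translation surfaces do not have.

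Concretely, I would take $X = \C^*/\langle z \mapsto 2z\rangle$ with the twisted quadratic differential induced by $dz^2$, as in Figure~\ref{fig:hopf}. For each $\lambda \in \R_+$ the map $\widetilde\phi_\lambda : \C^* \to \C^*$, $z \mapsto \lambda z$, commutes with the deck transformation $z \mapsto 2z$, so it descends to a self-homeomorphism $\phi_\lambda$ of $X$. In the local $z$-coordinate coming from $q$, $\widetilde\phi_\lambda$ is real affine with derivative the scalar $\lambda \cdot \mathrm{Id}$, hence $\phi_\lambda \in \Aff(X,\rho,q)$. The first step is to verify these claims (well-definedness of $\phi_\lambda$, affine character, and computation of $D\phi_\lambda$), which are direct.

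Next I would argue that the map $\lambda \mapsto \phi_\lambda$ has kernel exactly $\langle 2 \rangle \subset \R_+$, so for $\lambda \in \R_+ \setminus \langle 2 \rangle$ the automorphism $\phi_\lambda$ is nontrivial as a self-map of $X$; this is clear because $\phi_\lambda$ moves a generic point $[z]$ to $[\lambda z]$, and $[\lambda z] = [z]$ on the quotient forces $\lambda \in \langle 2 \rangle$. On the other hand, the family $\{\phi_{\lambda^t}\}_{t \in [0,1]}$ defines a continuous isotopy from $\phi_1 = \mathrm{id}$ to $\phi_\lambda$, so $\phi_\lambda$ projects to the identity in $\mathrm{Mod}(\Sigma)$. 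Therefore the kernel of $\Aff(X,\rho,q) \to \mathrm{Mod}(\Sigma)$ contains an uncountable subgroup, proving non-injectivity.

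There is no real obstacle here; the only mild subtlety is making sure that $\phi_\lambda$ is genuinely an affine automorphism in the sense of the paper (orientation-preserving and locally real affine in the natural coordinates) and distinct from the identity as a set-map, rather than merely as an element of some larger group. Both of these are immediate from the explicit formulas on the universal cover $\C^*$. One could remark in passing that this phenomenon is specific to dilation surfaces with nontrivial holonomy: for a translation surface the analogous family of affine maps with scalar derivative would have to preserve the area form, forcing $\lambda = 1$.
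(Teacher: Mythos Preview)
Your proof is correct and, if anything, more economical than the paper's. You exhibit the non-injectivity already on a single genus~$1$ Hopf torus via the scalar dilations $\phi_\lambda(z)=\lambda z$, which is entirely valid: these are affine with $D\phi_\lambda$ trivial in $\text{PSL}(2,\mathbb{R})$, they form a continuous family, and hence all project to the identity in $\text{Mod}(\Sigma)$ while being distinct self-maps for $\lambda\notin\langle 2\rangle$.

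The paper instead works one genus higher: it takes the connect sum of two Hopf tori along horizontal slits and observes that every upper-triangular matrix $\begin{bmatrix} a & b \\ 0 & 1/a\end{bmatrix}$ acts affinely while preserving (or swapping) the slits, giving two connected components of $\Aff(X,\rho,q)$ each mapping to a single mapping class. The payoff of the paper's choice is that it demonstrates the failure of injectivity in genus $g\geq 2$, where the mapping class group is infinite and the situation is not degenerate in the way genus~$1$ often is (indeed, the paper later treats genus~$1$ separately and notes that Hopf tori have Veech group \emph{all} of $\text{SL}(2,\mathbb{R})$). Your argument buys simplicity and transparency; the paper's buys the reassurance that the phenomenon is not a low-genus accident. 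Either suffices for the proposition as stated.
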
 

\begin{proof} We will demonstrate that the projection map is not necessarily injective by finding a dilation surface for which this is true. Let us consider the surface in Figure \ref{fig:2hopf}.
	
	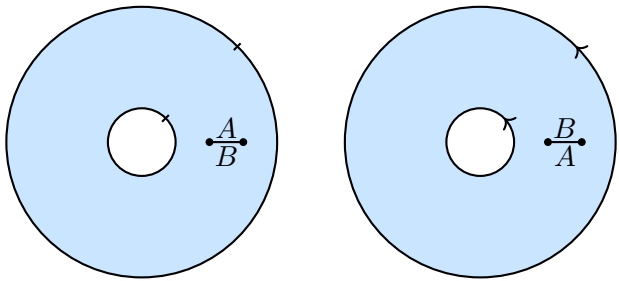
\begin{figure}[ht]
	\begin{center} 
		\begin{tikzpicture}[scale = 0.9]
		\draw[thick, fill = {rgb,255:red,202; green,229; blue,255}] (0,0) circle [radius=2];
		\draw[thick, fill = white] (0,0) circle [radius=0.5];
		\draw[thick] (0.3, 0.3) -- (.4, .4); 
		\draw[thick] (1.36, 1.36) -- (1.46, 1.46); 
		
		\draw[thick, fill = {rgb,255:red,202; green,229; blue,255},
		decoration={markings, mark=at position 0.125 with {\arrow{>}}},
		postaction={decorate} 
		] (5,0) circle [radius = 2];
		\draw[thick, decoration={markings, mark=at position 0.125 with {\arrow{>}}}, postaction={decorate}, fill = white] (5,0) circle [radius=0.5];
		\draw[thick] (1,0) -- (1.5,0); 
		\draw[thick] (6,0) -- (6.5,0); 
		\draw[fill] (1,0) circle [radius = 0.05]; 
		\draw[fill] (1.5,0) circle [radius = 0.05]; 
		\draw[fill] (6,0) circle [radius = 0.05]; 
		\draw[fill] (6.5,0) circle [radius = 0.05]; 
		\node at (1.25, 0.2) {$A$}; 
		\node at (1.25, -0.2) {$B$}; 
		\node at (6.25, 0.2) {$B$}; 
		\node at (6.25, -0.2) {$A$}; 
		\end{tikzpicture} 
	\end{center}
	\caption{The connect sum of two Hopf tori.}
	\label{fig:2hopf}
	\end{figure}

	This surface is the connect sum of two Hopf tori, where a horizontal slit is cut in each torus and the top of each slit is identified with the bottom of the slit on the other torus. Then, we can check that any real affine map that either preserves each slit or permutes the two slits is in $\Aff(X, \rho, q)$. These maps have derivatives of the form $\begin{bmatrix} a & b \\ 0 & 1/a \end{bmatrix}$, and all matrices of this form are the derivative of such a map. The set of such maps forms two connected components, one where the slits are swapped and one where they are preserved. Since the map $\Aff(X, \rho, q) \rightarrow \text{Mod}(\Sigma)$ is continuous, and $\text{Mod}(\Sigma)$ is discrete, this implies that each connected component is mapped to one mapping class group element (specifically, the identity and the element that swaps the two tori). The preimage of either of these elements is then infinite. 
\end{proof} 

The above proof and example shows that not only can the map $\Aff(X, \rho, q)$ be not injective, but the preimage of an element of $\text{Mod}(\Sigma)$ can be quite large. We will keep this in mind as we investigate the representability of mapping class group elements in the affine automorphism group of dilation surfaces. 

\section{The realization problem}
\label{sec:realization}

In this section, we will discuss the realization problem, the main topic of this paper. We will start by carefully stating the problem and giving some background on what is known for translation surfaces. An important tool in tackling the realization problem for translation surfaces is the correspondence between the trace of the derivative of an affine automorphism and the Nielsen-Thurston classification of the underlying surface homeomorphism. We will see that this correspondence no longer holds for twisted quadratic differentials and demonstrate this explicitly by showing the existence of exotic Dehn twists. 

\subsection{Statement of the realization problem} 

There is a well-defined map $$p: \Aff(X, \rho, q) \rightarrow \text{Mod}(\Sigma)$$ taking an affine automorphism to the isotopy class of its underlying surface homeomorphism. One big question that we can ask is then the following: 

\textbf{The Realization Problem:} For which elements or subgroups $H \subset \text{Mod}(\Sigma)$ does there exist a dilation surface $(X, \rho, q)$ for which $p (\Aff(X, \rho, q)) \supset H$? 

In other words, what elements or subgroups of the mapping class group can be realized in the affine automorphism group of some dilation surface? To answer this question, one necessary condition is that for every $g \in H$, $g^* \rho = \rho$. Taking this into account, a more refined question that we can ask is:  

\textbf{Question:} Given a subgroup $H \subset \text{Mod}(\Sigma)$, for which $\rho \in \pi^1(\Sigma, \R_+)^H$, the $H$-invariant holonomy representations, does there exist a dilation surface for which $p(\Aff(X, \rho, q)) \supset H$? 

For translation surfaces, there is a well known correspondence between the topological behavior of an affine automorphism $\varphi$ of a translation surface and whether $D \varphi$ is elliptic, parabolic, or hyperbolic (for example, see \cite{Mo}). We recall that a matrix is elliptic, parabolic, or hyperbolic if the absolute value of its trace is less than $2$, equal to $2$, or greater than $2$ respectively. 

\begin{prop} 
	\label{prop:transcorr}If $\varphi$ is an affine automorphism of a translation surface $(X, q)$, then $\varphi$ is periodic if and only if $D\varphi$ is elliptic, reducible if and only if $D \varphi$ is parabolic, and pseudo-Anosov if and only if $D\varphi$ is hyperbolic.
\end{prop}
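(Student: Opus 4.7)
The plan is to proceed by cases on the type of $D\varphi \in \mathrm{SL}(2,\mathbb{R})$. Since the Nielsen--Thurston trichotomy (periodic, reducible, pseudo-Anosov) and the matrix trichotomy (elliptic, parabolic, hyperbolic) are each exhaustive and mutually exclusive, it suffices to establish one direction in each case: elliptic $\Rightarrow$ periodic, parabolic $\Rightarrow$ reducible (with some power a Dehn multitwist), and hyperbolic $\Rightarrow$ pseudo-Anosov. The reverse implications then follow automatically from exclusivity.

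For the elliptic case, I would use that $D\varphi$ preserves a Euclidean inner product on $\mathbb{R}^2$, so $\varphi$ acts by isometries of the induced flat metric and in particular preserves the length of saddle connections of $(X,q)$. The standard finiteness of saddle connections of bounded length on a translation surface then forces $\varphi$ to permute a finite set, so some power $\varphi^N$ fixes a spanning pair of saddle connections. This pins down the derivative, giving $D\varphi^N = I$; after a further iterate to fix the cone points, $\varphi^{kN}$ is a translation of $(X,q)$ fixing a point, hence the identity. For the hyperbolic case, I would extract the two real eigendirections of $D\varphi$ with eigenvalues $\lambda^{\pm 1}$, $|\lambda|>1$. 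After rotating the translation structure so these eigendirections are horizontal and vertical, the induced horizontal and vertical measured foliations of $(X,q)$ play the role of unstable and stable foliations of $\varphi$, with transverse measures scaled by $\lambda$ and $\lambda^{-1}$ respectively---precisely the defining condition of pseudo-Anosov.

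The parabolic case is the main obstacle, and where I expect to spend the most care. Here $D\varphi$ fixes a unique direction $v$, and the foliation $\mathcal{F}_v$ on $(X,q)$ is $\varphi$-invariant. The crux is to show that $(X,q)$ is completely periodic in direction $v$, so that it decomposes as a union of Euclidean cylinders with cores parallel to $v$. To see this, I would use that a parabolic shear preserves lengths along its fixed direction, so $\varphi$ permutes saddle connections in direction $v$ of any fixed length; finiteness of saddle connections of bounded length then gives that some power of $\varphi$ fixes a saddle connection in direction $v$, and a Veech-dichotomy-style argument upgrades the absence of dense $v$-leaves to a complete cylinder decomposition. Once the cylinder decomposition is in hand, $\varphi$ acts on each cylinder as an affine automorphism preserving the cylinder structure, hence as an integer power of a Dehn twist around the core. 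Taking a common power of all the cylinder twists yields that $\varphi^n$ is a Dehn multitwist, so in particular $\varphi$ is reducible.
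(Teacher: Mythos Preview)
The paper does not actually prove this proposition: it is stated as a well-known fact for translation surfaces with a reference to \cite{Mo}, and is then used as a black box in the proof of Proposition~\ref{prop:classification_translation}. So there is no proof in the paper to compare against directly. That said, the paper \emph{does} prove the analogous statements for dilation surfaces (Lemma~\ref{lem: parabolic}, Propositions~\ref{prop:trace}--\ref{prop:hyperbolic2}), and it is instructive to compare your parabolic argument to the paper's Lemma~\ref{lem: parabolic}.

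Your elliptic and hyperbolic cases are fine in outline. The parabolic case, however, has a genuine gap. You assert that finiteness of saddle connections of bounded length gives a fixed saddle connection in direction $v$, but you have not yet shown that any saddle connection in direction $v$ exists; and the phrase ``a Veech-dichotomy-style argument upgrades the absence of dense $v$-leaves to a complete cylinder decomposition'' is not an argument. The Veech dichotomy applies to lattice surfaces, not to arbitrary translation surfaces, so invoking it here is circular or inapplicable. What you actually need---and what the paper carries out carefully in Lemma~\ref{lem: parabolic} for the harder dilation setting---is the following: pass to a power $\varphi^n$ fixing every cone point and every horizontal separatrix; observe that in a chart near a cone point $D\varphi^n$ must act as the identity on horizontal vectors, so $\varphi^n$ fixes each horizontal separatrix \emph{pointwise}; then argue that an infinite horizontal separatrix would recur to a small chart, forcing $\varphi^n$ to fix a non-horizontal pair of points there and hence $D\varphi^n = I$, a contradiction. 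This is what forces every horizontal separatrix to be a saddle connection, after which the cylinder decomposition (and the Dehn-multitwist conclusion) follows from Euler characteristic. Replacing your dichotomy appeal with this recurrence argument would close the gap.
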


This correspondence helps us give a partial answer to the realization problem for translation surfaces. Specifically, it helps us narrow down the types of mapping class group elements that can be realized in the affine automorphism group of a translation surface. 

\begin{prop} 
	
\label{prop:classification_translation}
If $\varphi$ is an affine automorphism of a translation surface $(X, q)$, then $\varphi$ is finite-order, psuedo-Anosov, or there exists a power $\varphi^n$ that is a Dehn multitwist. 
\end{prop}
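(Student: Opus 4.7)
The plan is to combine the Nielsen-Thurston classification with Proposition \ref{prop:transcorr}, which correlates the topological type of $\varphi$ with the spectral type of $D\varphi$. Two of the three cases are essentially immediate. If $D\varphi$ is elliptic, then by Proposition \ref{prop:transcorr} $\varphi$ is periodic, and a periodic orientation-preserving homeomorphism of a closed surface is finite order. If $D\varphi$ is hyperbolic, then $\varphi$ is already pseudo-Anosov and we are done.

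The content of the proposition therefore lies in the parabolic case. After conjugating the translation surface by an element of $\text{SL}(2,\mathbb{R})$ (which does not affect the topological type of $\varphi$), we may assume $D\varphi = \begin{pmatrix} \pm 1 & t \\ 0 & \pm 1 \end{pmatrix}$, so that the eigendirection is horizontal. The key geometric claim is that the horizontal direction on $(X, q)$ is \emph{completely periodic}: the surface decomposes into finitely many horizontal Euclidean cylinders $C_1, \dots, C_k$ glued along a union of horizontal saddle connections. Granting this, the restriction of $\varphi$ to each cylinder $C_j$ is a horizontal shear whose translation length differs from an integer multiple of the circumference of $C_j$ by a rational amount depending only on $t$ and the modulus $\mu_j$ of $C_j$. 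Choosing $n$ to be a common denominator of the rational numbers $t \mu_j$, the map $\varphi^n$ acts on each $C_j$ as an integer power of the Dehn twist about the core curve of $C_j$, and is the identity on the boundary skeleton. Hence $\varphi^n$ is a Dehn multitwist supported on the core curves of the cylinders.

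The main obstacle is establishing the cylinder decomposition. The idea is to use reducibility: since $\varphi$ is isotopic to a reducible homeomorphism by Proposition \ref{prop:transcorr}, it preserves some essential isotopy class of a simple closed curve $\alpha$. Straighten $\alpha$ to a flat geodesic representative $\alpha^\ast$. If $\alpha^\ast$ had any nonhorizontal component then $D\varphi$, being a nontrivial horizontal shear, would strictly increase its flat length, contradicting the fact that $\alpha^\ast$ is the unique length-minimizing representative of a $\varphi$-invariant isotopy class. Hence $\alpha^\ast$ is horizontal, which produces a horizontal cylinder or horizontal saddle connection on $(X,q)$. A classical argument of Veech then bootstraps the existence of one horizontal closed geodesic, together with $\varphi$-invariance of the horizontal foliation, into a full horizontal cylinder decomposition, since the boundary of a maximal horizontal cylinder consists of horizontal saddle connections whose union is $\varphi$-invariant and whose complement is again a union of horizontal cylinders.

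Once the cylinder decomposition is in hand, the rest of the argument is a routine computation with the shear action on each cylinder, so the substantive work is concentrated in translating reducibility into complete periodicity of the parabolic eigendirection.
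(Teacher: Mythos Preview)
Your approach differs from the paper's in the parabolic case. The paper argues purely topologically: take a \emph{maximal} reducing system $\mathcal{C}$ for $\varphi$, pass to a power $\varphi^n$ fixing each curve in $\mathcal{C}$, and note that on each complementary piece $\varphi^n$ is still affine with parabolic derivative, hence reducible by Proposition~\ref{prop:transcorr}; maximality of $\mathcal{C}$ then forces $\varphi^n$ to be the identity on each piece, so $\varphi^n$ is a Dehn multitwist along $\mathcal{C}$. No cylinder decomposition is invoked. Your geometric route through complete periodicity of the eigendirection is the other standard way to do this, and it has the advantage of exhibiting the twist curves concretely as core curves of flat cylinders.

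As written, though, your execution has two gaps. First, your bootstrap for complete periodicity is circular: you assert that the complement of a maximal horizontal cylinder ``is again a union of horizontal cylinders,'' which is exactly what you are trying to prove. The correct argument (compare Lemma~\ref{lem: parabolic} later in the paper) is that some power of $\varphi$ fixes every horizontal separatrix pointwise, and an infinite separatrix would then have an accumulation point near which $\varphi^n$ fixes infinitely many points, forcing $D\varphi^n = I$; hence every horizontal separatrix is a saddle connection, and the complement of these is necessarily a union of horizontal cylinders. Second, your claim that the numbers $t\mu_j$ are rational, so that one may ``choose $n$ to be a common denominator,'' is unjustified as stated. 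Rationality is not an input here but a \emph{consequence}: once a power $\varphi^m$ fixes the boundary saddle connections of each cylinder pointwise, it is automatically an integer power of the Dehn twist on that cylinder (since an affine shear of a Euclidean cylinder that is the identity on both boundary components must be). The logic should run in that direction rather than by positing rationality and then selecting $n$.
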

\begin{proof} If $D\varphi$ is elliptic or hyperbolic, then by Proposition \ref{prop:transcorr}, $\varphi$ is finite order or pseudo-Anosov respectively. If $D \varphi$ is parabolic, then by the same proposition, we know that $\varphi$ is reducible and therefore we can find a maximal finite set of disjoint simple closed curves $\mathcal{C}$ preserved by $\varphi$. Some finite power $\varphi^n$ of $\varphi$ then fixes each curve individually. These curves cut the surface into finitely many pieces, and $\varphi^n$ must be reducible on each of these pieces by the trace considerations of Proposition \ref{prop:transcorr}. Since the original curves were chosen to be maximal, it follows that $\varphi^n$ is the identity on these pieces. On each curve of $\mathcal{C}$ itself, $\varphi^n$ can only be some multiple of a Dehn twist. It follows then that $\varphi^n$ is a Dehn multitwist. 
\end{proof}

We would like to find a similar result for dilation surfaces. We recall that for $\varphi \in \Aff(X,\rho,q)$, $D\varphi$ is really only defined up to scale. So when we refer to $D\varphi$ being hyperbolic, parabolic, or elliptic, we are referring to the nature of the trace of $D\varphi$ after scaling so that $\det(D\varphi) = 1$. 

Even with this technicality out of the way, we will soon see that the strict correspondence in Proposition \ref{prop:transcorr} between the type of the derivative matrix $D\varphi$ for $\varphi \in \Aff(X, \rho, q)$ and the classification of the underlying topological map breaks down for dilation surfaces. This will open the door for the possibility of affine automorphisms of dilation surfaces that display mixed behavior, for example being pseudo-Anosov in one part of the surface and a Dehn twist in another. 

Before discussing the realization problem for dilation surfaces in general, let us begin with the case of genus one surfaces. Genus one dilation surfaces are well-understood and have very different dynamics from higher genus surfaces, so we will treat them separately. 

\subsection{Genus One Dilation Surfaces}
\label{sec:genus1mod}

As we will see, there is a stark difference between the the geometry and dynamics of genus one dilation surfaces and that of dilation surfaces of higher genus. Furthermore, in genus one, we understand what every dilation surface can look like and what its Veech group is, whereas in higher genus, much is still unknown. 

We start with some examples of genus one dilation surfaces. The simplest example of a genus one dilation surface is one that resembles a translation surface. These surfaces are represented by parallelograms with parallel sides identified, and have trivial holonomy.

Another example of a genus one dilation surface is an annulus with inner and outer boundaries identified, and example of a Hopf torus. This dilation surface can also be thought of as the exponentiation of a parallelogram torus such that the identifications after exponentiation are via dilation by a real factor and translation. Such a surface is shown in Figure \ref{fig:genus1ex}.
	
	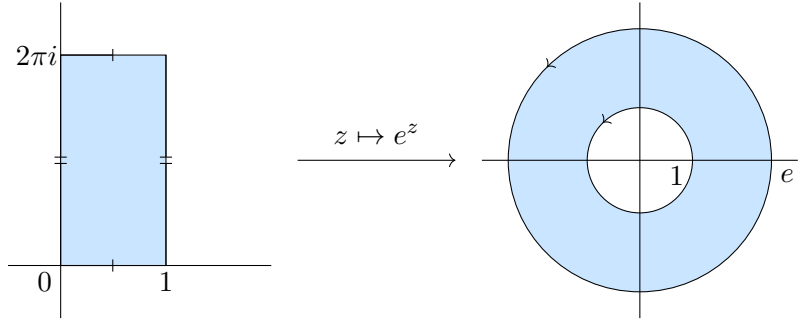
\begin{figure}[ht]
	\begin{center}
		\begin{tikzpicture}[scale = 0.7] 
			\draw[fill = {rgb,255:red,202; green,229; blue,255}, decoration={markings, mark=at position 0.375 with {\arrow{>}}},
			postaction={decorate} ] (9,0) circle [radius=2.5];
			\draw[fill = white, decoration={markings, mark=at position 0.375 with {\arrow{>}}},
			postaction={decorate} ] (9,0) circle [radius=1];
			\draw (-3,-2) -- (2,-2);
			\draw (-2,-3) -- (-2,3); 
			\draw (6,0) -- (12,0);
			\draw (9, -3) -- (9,3); 
			\draw[fill = {rgb,255:red,202; green,229; blue,255}] (-2,-2) -- (0,-2) -- (0,2) -- (-2,2) -- (-2,-2); 
			\draw[-|] (-2,-2) -- (-1,-2); 
			\draw[-|] (-2,2) -- (-1,2); 
			\draw[-|] (-2,-2) -- (-2,-.05);
			\draw[|-] (-2,.05) -- (-2,2);
			\draw[-|] (0,-2) -- (0,-.05);
			\draw[|-] (0,.05) -- (0,2);
			\draw[->] (2.5,0) -- (5.5,0);
			\node at (4,0.5) {$z \mapsto e^z$};
			\node at (0,-2.3) {$1$};
			\node at (-2.3, -2.3) {$0$};
			\node at (-2.45, 2) {$2\pi i$};
			\node at (9.7, -0.3) {$1$}; 
			\node at (11.8, -0.3) {$e$}; 
		\end{tikzpicture} 
	\end{center} 
	\caption{A genus one dilation surface coming from the exponentiation of a translation surface.}
	\label{fig:genus1ex}
	\end{figure}
	
It turns out that all genus one dilation surfaces resemble one of these two examples. Those with trivial holonomy are just translation surfaces, whereas those with nontrivial holonomy are all exponentiations $z \mapsto e^{\alpha z}$ of translation tori. 

We also understand the Veech groups of both of these types of surfaces. In the former translation surface case, the Veech group is isomorphic to $\text{SL}(2, \mathbb{Z})$ and every mapping class group element of the torus is represented in the Veech group. In the latter exponentiated translation surface case, the Veech group is all of $\text{SL}(2,\mathbb{R})$ and therefore only the identity element of the mapping class group is represented in the Veech group. For more details on this latter case, see \cite{DFG}.

\subsection{Exotic Dehn twists for twisted quadratic differentials}
\label{sec:dehn}

Moving on from genus one dilation surfaces, we will now encounter one of the primary difficulties to resolving the realization problem for higher genus dilation surfaces: the existence of exotic Dehn twists. 

As we saw in Proposition \ref{prop:transcorr}, if $\varphi$ is an affine automorphism of a translation surface $(X,q)$ that is a Dehn twist or a Dehn multitwist, then $D\varphi$ is parabolic. The translation surface then decomposes into finitely many Euclidean cylinders and $\varphi$ acts by twisting finitely many times around the core curves of each cylinder. 
	
We call such maps \textbf{standard Dehn multitwists}. Standard Dehn multitwists also exist as affine automorphisms of certain dilation surfaces. But Dehn multitwists on dilation surfaces can also be more exotic: if a Dehn multitwist $\varphi$ twists along the core curves of affine cylinders, curves that now have nontrivial holonomy, then we call it an \textbf{exotic Dehn multitwist}. We will demonstrate the existence of such exotic Dehn multitwists in the following theorem. 

\dehn

\begin{proof}
	
	We will prove this theorem by constructing explicit examples of dilation surfaces with an exotic Dehn multitwist in their affine automorphism group. To constructe genus $g \geq 3$ examples, we start with a genus $1$ surface that is an annulus in the plane, centered at the origin, with outer and inner rings at radius $2$ and $1$ identified. Alternatively, we think of this surface as $\C^*/z \sim 2z$. 
	
	We recall from our discussion of the genus $1$ case that all of $\text{PSL}(2, \mathbb{R})$ stabilizes this surface. Let us consider the specific matrix $M = \begin{bmatrix} 2 & 0 \\ 0 & 1 \end{bmatrix} \cong \begin{bmatrix} \sqrt{2} & 0 \\ 0 & 1/\sqrt{2}\end{bmatrix} \in \text{PSL}(2, \mathbb{R})$. Applying this matrix to our surface corresponds to a map that is preserves the horizontal and the vertical foliations and stretches the whole surface horizontally. Figure \ref{fig:genus1stretch} shows this surface as well as four important curves $a_1, a_1, b_1,$ and $b_2$ that are fixed pointwise by $M$. 
	
	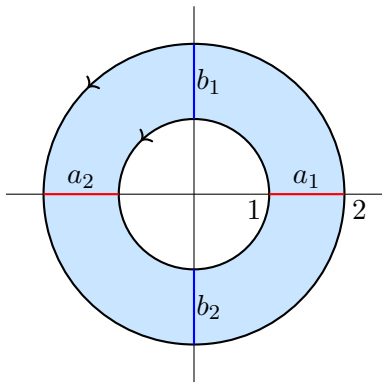
\begin{figure}[ht]
	\begin{center}
		\begin{tikzpicture}
			\draw[thick, fill = {rgb,255:red,202; green,229; blue,255}, decoration={markings, mark=at position 0.375 with {\arrow{>}}},
			postaction={decorate} ] (0,0) circle [radius=2];
			\draw[thick, fill = white, decoration={markings, mark=at position 0.375 with {\arrow{>}}},
			postaction={decorate} ] (0,0) circle [radius=1];
			\draw (-2.5, 0) -- (2.5, 0);
			\draw (0, -2.5) -- (0, 2.5); 
			\node at (0.8, -0.2) {$1$}; 
			\node at (2.2, -0.2) {$2$}; 
			
			\draw[thick, red,] (1,0) -- (2,0); 
			\draw[thick, red] (-1,0) -- (-2,0); 
			\draw[thick, blue] (0,1) -- (0,2); 
			\draw[thick, blue] (0,-1) -- (0,-2); 
			\node at (1.5, 0.2) {$a_1$}; 
			\node at (-1.5, 0.2) {$a_2$}; 
			\node at (0.2,1.5) {$b_1$}; 
			\node at (0.2,-1.5) {$b_2$}; 
		\end{tikzpicture} 
	\end{center}
	\caption{A genus $1$ dilation surface. The curves marked $\alpha_1, \alpha_2, \beta_1,$ and $\beta_2$ are fixed pointwise by the map $M = \begin{bmatrix} 2 & 0 \\ 0 & 1 \end{bmatrix}$.}
	\label{fig:genus1stretch}
	\end{figure} 
	
	The map $M$ will fix pointwise the horizontal curves $a_1$ and $a_2$, as well as the vertical curves $b_1$ and $b_2$. Topologically, the map $M$ is the product of two point pushing maps one time around the curves $a_1$ and $a_2$ in the radially outward direction. This map is isotopic to the identity.

	One way that we can create higher genus examples of exotic Dehn twists is by building off of this map and adding slits to the surface along the fixed curves $\alpha_i$ and $\beta_j$ via the slit or connect sum construction. Figure \ref{fig:exoticDehn} gives an example of a genus $3$ surface with an exotic Dehn multitwist in its affine automorphism, given by the matrix $M$. 
	
	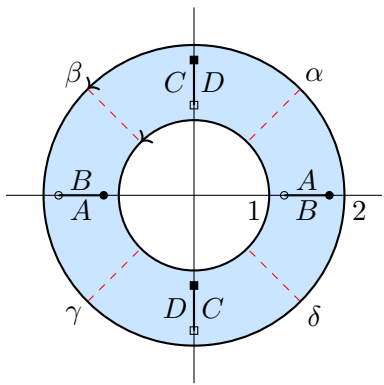
\begin{figure}[ht]
	\begin{center}
		\begin{tikzpicture}
			\draw[thick, fill = {rgb,255:red,202; green,229; blue,255}, decoration={markings, mark=at position 0.375 with {\arrow{>}}},
			postaction={decorate} ] (0,0) circle [radius=2];
			\draw[thick, fill = white, decoration={markings, mark=at position 0.375 with {\arrow{>}}},
			postaction={decorate} ] (0,0) circle [radius=1];
			\draw (-2.5, 0) -- (2.5, 0);
			\draw (0, -2.5) -- (0, 2.5); 
			\node at (0.8, -0.2) {$1$}; 
			\node at (2.2, -0.2) {$2$}; 
			
			\draw[thick] (1.2,0) -- (1.8,0); 
			\draw  (1.2,0) circle [radius = 0.05]; 
			\draw[fill] (1.8,0) circle [radius = 0.05]; 
			
			\draw[thick] (-1.2,0) -- (-1.8,0); 
			\draw[fill]  (-1.2,0) circle [radius = 0.05]; 
			\draw (-1.8,0) circle [radius = 0.05]; 
			
			\draw[thick] (0,1.2) -- (0,1.8);
			\draw (-0.05, 1.15) rectangle (0.05, 1.25); 
			\draw[fill] (-0.05, 1.75) rectangle (0.05, 1.85); 
			
			\draw[thick] (0,-1.2) -- (0,-1.8);
			\draw[fill] (-0.05, -1.15) rectangle (0.05, -1.25); 
			\draw (-0.05, -1.75) rectangle (0.05, -1.85); 
			
			\node at (1.5, 0.2) {$A$}; 
			\node at (1.5, -0.2) {$B$}; 
			\node at (-1.5, 0.2) {$B$}; 
			\node at (-1.5, -0.2) {$A$}; 
			
			\node at (-0.25, 1.5) {$C$}; 
			\node at (0.25, 1.5) {$D$}; 
			\node at (0.25, -1.5) {$C$}; 
			\node at (-0.25, -1.5) {$D$}; 
			
			\draw[red,dashed] (1.41, 1.41) -- (0.705, 0.705); 
			\draw[red,dashed] (1.41, -1.41) -- (0.705, -0.705); 
			\draw[red,dashed] (-1.41, 1.41) -- (-0.705, 0.705); 
			\draw[red,dashed] (-1.41, -1.41) -- (-0.705, -0.705); 
			\node at (1.6,1.6) {$\alpha$};
			\node at (-1.6,1.6) {$\beta$};
			\node at (-1.6,-1.6) {$\gamma$};
			\node at (1.6,-1.6) {$\delta$};
		\end{tikzpicture} 
	\end{center} 
	\caption{A genus $3$ surface for which the map $M = \begin{bmatrix} 2 & 0 \\ 0 & 1 \end{bmatrix}$ is an exotic Dehn multitwist on the surface.}
	\label{fig:exoticDehn}
	\end{figure}
	
	In this surface, we took our genus $1$ example and added a pair of horizontal slits and a pair of vertical slits. Opposite sides of paired slits are identified, and each pair of slits adds one to the total genus of the surface. Since the slits are placed on the horizontal and vertical curves of the surface, which are fixed pointwise by $M$, $M$ preserves the slits. The map $M$ is then the product of Dehn twists $T_\alpha^{-1}$, $T_\beta$, $T_\gamma^{-1}$, and $T_\delta$. These four Dehn twists are all through curves that have nontrivial holonomy. The presence of the slits also stops this map from being isotopic to the identity. To obtain a genus $g \geq 3$ example, we add $g-1$ pairs of slits to the horizontal and vertical curves $\alpha_i$ and $\beta_j$ of Figure \ref{fig:genus1stretch}, with at least one slit on each of the four curves so that the resulting map is not isotopic to the identity. 
\end{proof} 

This example shows the existence of a new phenomenon for dilation surfaces that does not occur for translation surfaces: a reducible mapping class group element that is represented by a hyperbolic matrix. More specifically, we have a Dehn multitwist around loops that are not the core curves of Euclidean cylinders, but around the core curves of affine cylinders. We also notice that in our examples, the derivative of our exotic Dehn twists as an element of $\text{PSL}(2, \mathbb{R})$ is always hyperbolic, breaking the strict correspondence between the trace of the Veech group matrix and the classification of the topological map as given in Proposition \ref{prop:transcorr}. 


\subsection{Trace of affine automorphism group elements} 

For translation surfaces, the trace of an affine automorphism element dictates what type of mapping class group element it is. The underlying surface homeomorphism is periodic, reducible, or pseudo-Anosov corresponding to whether the derivative of the affine automorphism is an elliptic, parabolic, or hyperbolic matrix. 

We have already seen that the existence of exotic Dehn twists breaks this correspondence for dilation surfaces. That is, there exist affine automorphisms of dilation surfaces that are Dehn multitwists (and therefore reducible), but have a hyperbolic derivative. We will show that this is the only exception to the correspondence between the derivative matrix type and Nielson-Thurston classification. This will then allow us to give a classification similar to Proposition \ref{prop:classification_translation} but now for dilation surfaces. We recall that a separatrix is a straight line trajectory emanating from a cone point and a saddle connection is a straight line trajectory connecting two cone points. We begin with the following lemma. 

\begin{lem} 
	\label{lem: parabolic}
	If $(X, \rho, q)$ is a dilation surface and $\varphi \in \Aff(X, \rho, q)$ with $D\varphi$ parabolic, then every separatrix of $(X, \rho, q)$ in the shearing direction of $D\varphi$ is a saddle connection. 
\end{lem}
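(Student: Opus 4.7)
The plan is to reduce to the horizontal direction and analyse the fixed set of a suitable power of $\varphi$. After conjugating in $\mathrm{SL}(2,\mathbb{R})$, I assume the shearing direction of $D\varphi$ is horizontal, so $\varphi$ preserves the horizontal foliation. Fix a horizontal separatrix $\sigma$ from a cone point $p$. Since there are only finitely many cone points and only finitely many horizontal separatrices, a suitable power $\psi = \varphi^N$ fixes $p$ and fixes $\sigma$ setwise; by passing to $\varphi^{2N}$ if necessary, one can also arrange $D\psi = \begin{pmatrix} 1 & s \\ 0 & 1 \end{pmatrix}$ in the horizontal basis with $s \neq 0$.

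The first substantive step is to show $\psi|_\sigma = \mathrm{id}$. In a flat chart along the sector at $p$ containing $\sigma$, the map $\psi$ is linear (since it fixes $p$) with matrix $D\psi$, and because horizontal vectors are fixed by $D\psi$, $\psi$ is the identity on an initial segment of $\sigma$. I then run an open-closed argument along $\sigma$: the fixed set of $\psi|_\sigma$ is obviously closed, and around any regular fixed point $\sigma(t_0)$ the local fixed locus of $\psi$ is the horizontal line through $\sigma(t_0)$ (the kernel of $D\psi - I$ translated to $\sigma(t_0)$), which locally coincides with $\sigma$. Connectedness of $\sigma$, which avoids cone points for $t>0$ (otherwise $\sigma$ would already be a saddle connection), then forces $\psi|_\sigma = \mathrm{id}$.

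Now I assume for contradiction that $\sigma$ is not a saddle connection, so $\sigma$ is an injective immersion of $[0,\infty)$ into the compact surface $X$ and must accumulate somewhere. I first claim $\sigma$ has a \emph{regular} accumulation point $q$: otherwise every accumulation point would be a cone point, forcing a tail of $\sigma$ to lie in a small disjoint neighborhood of the finite cone-point set, but a horizontal trajectory inside such a disk either exits the disk or terminates at the cone point (and the latter makes $\sigma$ a saddle connection). Picking such a regular $q$ and a short vertical transversal $\tau$ through $q$, the near-returns of $\sigma$ to $q$ produce a sequence $q_n \in \sigma \cap \tau$ with $q_n \neq q$ and $q_n \to q$. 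Each $q_n$ is fixed by $\psi$ because $\psi|_\sigma = \mathrm{id}$, and by continuity $\psi(q)=q$; in a flat chart at $q$ the map $\psi$ then acts as the linear map $D\psi$, so $\psi(\tau)$ is a short segment through $q$ in direction $(s,1)$. Since $s \neq 0$ this is transverse to $\tau$, giving $\tau \cap \psi(\tau) = \{q\}$ locally. But $q_n \in \tau$ and $q_n = \psi(q_n) \in \psi(\tau)$, so $q_n = q$ for $n$ large, contradicting $q_n \neq q$.

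The main obstacle I anticipate is justifying the passage from ``accumulation point of $\sigma$'' to ``infinitely many transversal crossings of $\sigma$ converging to a single regular point $q$'', and in particular ruling out the degenerate scenario in which $\sigma$ accumulates only at cone points; the cone-point trap argument is the one place where I need to use the geometry of horizontal trajectories near singularities rather than pure linear algebra. Once the regular accumulation point is in hand, the final contradiction between $\tau$ and $\psi(\tau)$ is a clean transversality statement relying only on $s \neq 0$.
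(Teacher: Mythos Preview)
Your outline matches the paper's approach (pass to a power fixing the separatrix, show it is fixed pointwise, then derive a contradiction from an accumulation point), and your transversal argument at the end is in fact a cleaner version of the paper's somewhat terse ``hence $D\varphi^n = I$''. However, there is one genuine gap, and it is exactly the place where dilation surfaces differ from translation surfaces.

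You assert that in a flat chart at the cone point $p$, the map $\psi$ is linear with matrix $D\psi = \begin{pmatrix} 1 & s \\ 0 & 1\end{pmatrix}$. On a dilation surface $D\psi$ is only well-defined in $\mathrm{PSL}(2,\R)$; the actual linear part of $\psi$ in a fixed chart at $p$ is some $\begin{pmatrix} a & b \\ 0 & a\end{pmatrix}$ with $a>0$, and being parabolic in $\mathrm{PSL}(2,\R)$ only forces the two diagonal entries to agree, not to equal $1$. If $a\neq 1$ then $\psi$ scales $\sigma$ by $a$ rather than fixing it pointwise: your open--closed argument never gets started, and the final contradiction collapses because the crossings $q_n$ are no longer fixed by $\psi$. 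Passing to a further power does not help, since $\begin{pmatrix} a & b \\ 0 & a\end{pmatrix}^k$ still has diagonal entries $a^k\neq 1$.

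This is precisely the point the paper isolates. It develops the surface radially from $p$ into $\R_k^2$, observes that the set of saddle-connection endpoints from $p$ is discrete there (citing \cite{DFG}), and that $\psi^{\pm 1}$ preserves this set. If $a\neq 1$, iterating $\psi^{\pm 1}$ would drag saddle-connection endpoints into any neighbourhood of the origin, contradicting the fact that $p$ has a singularity-free neighbourhood; hence $a=1$. Once that is in hand, the rest of your proof goes through unchanged.
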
 

\begin{proof} We may assume without loss of generality that $\varphi$ shears in the horizontal direction. Then $\varphi$ must permute the cone points and the horizontal separatrices of $(X, \rho, q)$. First, can find a power $\varphi^n$ of $\varphi$ such that $\varphi^n$ fixes every cone point and horizontal separatrix on the surface. If we pick any singularity $p$ of the surface, it has a neighborhood that is homeomorphic to a neighborhood of the origin in $\R_k^2$, the $k$-fold cyclic cover of $\R^2$ fully ramified at the origin. 
	
	We can then continue radially developing this chart around $p$. In any radial direction from the singularity, we can develop this chart until we hit a singularity. In this way, the saddle connections emanating from our original singularity $p$ can be thought of as a set of points in $\R_k^2$. It is known that this set is always discrete (see Proposition 7 in \cite{DFG}). This set is also well-defined up to scaling by $\R$, with the scaling determined by our original choice of chart around the singularity. 
	
	Then, since we assumed that $\varphi$ was locally a horizontal shear, if we keep this choice of chart around $p$ consistent, $$D\varphi^n = \begin{bmatrix} a & b \\ 0 & a \end{bmatrix}$$ for some $a \in \R_+, b \in \R$. $D \varphi^n$ and $D\varphi^{-n}$ both act on and preserve the saddle connection set as a subset of $\R_k^2$. Since there is a neighborhood of $p$ without any other singularities, we must have that $a = 1$ in the matrix for $D\varphi^n$ in a neighborhood of $p$. Hence, $\varphi^n$ is the identity on the horizontal separatrices coming out of $p$. Since $p$ was arbitrary, $\varphi^n$ is the identity on every horizontal separatrix of $(X, \rho, q)$. 
	
	Now, if $(X, \rho, q)$ had an infinite horizontal separatrix, then by compactness of the surface, there would be a sequence of distinct points $q_i$ on the separatrix that converge to some point $q$ on the surface. Since $\varphi^n$ fixes each $q_i$ and $\varphi^n$ is linear in a neighborhood of $q$, it follows that $D\varphi^n = I$ and $\varphi^n$ is the identity map on this neighborhood of $q$ and therefore the whole surface. 
	
	Hence, if $\varphi^n$ is nontrivial, every horizontal sepatrix must be finite and therefore a saddle connection. 
\end{proof} 

With this lemma, we can now prove the following proposition that states that with the exception of some reducible $\varphi \in \Aff(X, \rho, q)$ with hyperbolic derivative $D \varphi$, the correspondence between type of matrix element of the derivative and type of mapping class group element of the surface homeomorphism still holds. 

\begin{prop} 
	\label{prop:trace}
	Let $\varphi$ be an affine automorphism of the dilation surface $(X, \rho, q)$, and let $D\varphi \in \text{SL}(2,\mathbb{R})$. If $D\varphi$ is elliptic, then $\varphi$ is finite order. If $D\varphi$ is parabolic, then $\varphi$ is reducible. If $D\varphi$ is hyperbolic, then $\varphi$ is either pseudo-Anosov or reducible. 
\end{prop}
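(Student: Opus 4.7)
The plan is to handle the three cases separately, leveraging the existing Lemma~\ref{lem: parabolic} and a local analysis at the cone points of $q$.

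For the elliptic case, I adapt the opening of the proof of Lemma~\ref{lem: parabolic}. Since $\varphi$ permutes the finite set of cone points of $q$, after passing to a power we may assume $\varphi^N$ fixes each cone point $p$. Radially developing a chart at $p$ identifies the set of saddle connections emanating from $p$ with a discrete subset of $\R^2_k$ (Proposition~7 of \cite{DFG}). The elliptic derivative $D\varphi^N$ acts on this discrete set as a rotation about the origin, and any rotation that preserves a nonempty discrete subset of $\R^2_k\setminus\{0\}$ must have finite order (otherwise the orbit of any point would be dense on a circle). Therefore $D\varphi$ has finite order in $\text{SL}(2,\R)$; after passing to a further power we obtain $\varphi^M$ with $D\varphi^M=I$, so $\varphi^M$ is locally a translation in every chart. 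Since it fixes a cone point, $\varphi^M$ is the identity on a neighborhood of that point and hence on all of $X$, so $\varphi$ is of finite order.

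The parabolic case follows essentially directly from Lemma~\ref{lem: parabolic}: every separatrix in the shearing direction of $D\varphi$ is a saddle connection, so the finite union of these saddle connections cuts $X$ into a finite disjoint union of maximal affine cylinders in that direction. Since $\varphi$ preserves the shearing direction, it permutes these cylinders and the isotopy classes of their core curves, yielding a nonempty $\varphi$-invariant multicurve. Hence $\varphi$ is reducible.

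For the hyperbolic case, let $\mathcal{F}^u$ and $\mathcal{F}^s$ denote the foliations of $(X,\rho,q)$ in the unstable and stable eigendirections of $D\varphi$, with expansion factor $\lambda>1$. I would split into two sub-cases. If either $\mathcal{F}^u$ or $\mathcal{F}^s$ admits a closed leaf, then by an argument parallel to the parabolic case that leaf sits inside a maximal affine cylinder in its direction; $\varphi$ preserves the eigendirection and so permutes the finite collection of such maximal cylinders and the isotopy classes of their core curves, producing a $\varphi$-invariant multicurve, so $\varphi$ is reducible. If neither eigendirection foliation admits a closed leaf, both foliations are minimal, and the action of $\varphi$ stretching $\mathcal{F}^u$ by $\lambda$ and contracting $\mathcal{F}^s$ by $\lambda^{-1}$ is the defining data of a pseudo-Anosov map, so $\varphi$ is pseudo-Anosov.

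The most delicate step is this last one: verifying that the minimality of both eigendirection foliations together with the $\lambda^{\pm 1}$-stretching action of $\varphi$ delivers a genuine Nielsen--Thurston pseudo-Anosov, since the natural transverse measures on the eigendirection foliations of a dilation surface live in an $\R$-line bundle twisted by $\rho$ rather than being globally defined. I would address this either by extracting honest invariant measured foliations from the flat transverse measures in the minimal setting, or by invoking the topological characterization of pseudo-Anosov maps and using an exponential-length argument (analogous to the compactness argument at the end of Lemma~\ref{lem: parabolic}) to preclude any $\varphi$-invariant essential simple closed curve.
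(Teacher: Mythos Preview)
Your parabolic case matches the paper. The elliptic case takes a different route from the paper but has a small slip, and the hyperbolic case is massively overcomplicated and contains the only real gap.

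\textbf{Hyperbolic case.} The paper dispatches this in one line: by the Nielsen--Thurston classification, $\varphi$ is periodic, reducible, or pseudo-Anosov; but if $\varphi$ were periodic then $D\varphi$ would have finite order in $\text{SL}(2,\mathbb{R})$, which is impossible for a hyperbolic matrix. That is the entire argument. You are attempting to directly produce the pseudo-Anosov data from the eigendirection foliations, and you correctly flag the difficulty: the transverse measures are only $\rho$-twisted, so they do not give honest measured foliations, and the dichotomy ``closed leaf versus minimal'' is not clean on dilation surfaces (the eigendirection foliations can have Reeb components with attracting or repelling closed leaves). None of this is needed for the proposition as stated---you only need to rule out the periodic case, which is immediate. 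The finer analysis you are sketching is closer in spirit to the later Propositions~\ref{prop:hyperbolic} and~\ref{prop:hyperbolic2}, which do take real work.

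\textbf{Elliptic case.} Your approach via the discrete saddle-connection set at a fixed cone point is different from the paper's, which instead invokes the criterion from \cite{DFG} that $\text{SL}(X,\rho,q)$ is discrete if and only if saddle connections occur in more than one direction, and then argues separately in the discrete and non-discrete cases. Your argument can be made to work, but one step is wrong as written: on a dilation surface $D\varphi^M = I$ in $\text{PSL}(2,\mathbb{R})$ only tells you that $\varphi^M$ is locally of the form $z\mapsto az+b$ with $a\in\mathbb{R}_{>0}$, not that it is a translation. You need to use the discreteness of the saddle-connection set again (and the existence of a singularity-free neighborhood of the origin in $\R^2_k$) to force $a=1$ at the fixed cone point, after which $\varphi^M$ is the identity near $p$ and hence globally. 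Your argument also implicitly assumes the saddle-connection set at $p$ is nonempty; the paper's route via the discreteness dichotomy avoids needing this.
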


\begin{proof}
	Let $\varphi$ be an affine automorphism of the dilation surface $(X,\rho, q)$, and let $D\varphi$ be the derivative of $\varphi$, thought of as an element of $\text{PSL}(2, \mathbb{R})$. If $D\varphi$ is elliptic, then it is conjugate to a rotation matrix. By Proposition 8 in \cite{DFG}, we know that $\text{SL}(X, \rho, q)$ is discrete if and only if there exist saddle connections in more than one direction on the surface. If $\text{SL}(X, \rho, q)$ is discrete, then $D\varphi$ must be conjugate to a finite order rotation. If $\text{SL}(X, \rho, q)$ is not discrete, then the surface has saddle connections in only one direction, and so $D\varphi$ is conjugate either to the identity or rotation by $180^\circ$ matrix, because the automorphism must preserve the set of directions of the saddle connections. In either case, $\varphi$ must then be a finite order automorphism of the surface. 
	
	If $D\varphi$ is parabolic, then by Lemma \ref{lem: parabolic}, every separatrix in the shearing direction is a finite saddle connection. Some power $\varphi^n$ then must fix every finite saddle connection in that direction and therefore the curves on the surface that are the union of these saddle connections, implying that $\varphi^n$ and therefore $\varphi$ is reducible. 
	
	If $D\varphi$ is hyperbolic, then $\varphi$ cannot be finite order, since this would imply that $D\varphi$ is finite order. Therefore, $\varphi$ must be reducible or pseudo-Anosov. 
\end{proof}

We have already seen that if $D\varphi$ is hyperbolic, it can be an exotic Dehn twist, which is a reducible map. We will see later on that it can also be pseudo-Anosov. 

We wish to understand exactly what types of topological maps can arise as affine automorphisms of dilation surfaces. Proposition \ref{prop:trace} resolves this question for $D\varphi$ elliptic. The next proposition will show that just like for translation surfaces, $D\varphi$ being parabolic will imply that some power $\varphi^n$ of $\varphi$ is a Dehn multitwist.

\begin{prop}
	\label{prop:parabolic}
	If $\varphi$ is in $\Aff(X, \rho, q)$ and $D\varphi$ is parabolic, then some power of $\varphi$ is a standard Dehn multitwist. 
\end{prop}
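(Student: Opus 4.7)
The approach is to decompose $(X,\rho,q)$ into horizontal cylinders using Lemma \ref{lem: parabolic}, show these cylinders are Euclidean, and then use the mapping class group of the annulus to recognize a power of $\varphi$ as a standard Dehn multitwist.

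Without loss of generality, assume $D\varphi$ shears horizontally. Lemma \ref{lem: parabolic} implies that every horizontal separatrix is a saddle connection, so the set $\Gamma$ of all horizontal saddle connections is a finite graph in $X$ whose vertices are the cone points of $(X,\rho,q)$. The first claim is that each connected component $C$ of $X \setminus \Gamma$ is a horizontal cylinder foliated by closed horizontal leaves. In $C$ the horizontal foliation is nonsingular and no separatrices lie in its interior, so a Poincar\'e--Bendixson-type argument in the dilation structure (together with the fact that $\partial C \subset \Gamma$ already consists of closed horizontal leaves) forces every horizontal leaf in $C$ to be closed, giving the cylinder structure.

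Next I would verify that each such cylinder $C$ is Euclidean, by developing the universal cover of $C$ into $\R^2$ so that horizontal leaves map to horizontal lines. The deck transformation for the core curve $\gamma_C$ then has the form $z \mapsto \lambda z + c$ with $\lambda = \rho(\gamma_C)$. For the interior of $C$ to consist of a continuous family of closed horizontal leaves, this map must preserve the horizontal line $y = y_0$ for every $y_0$ in an open interval, and the equation $\lambda y_0 + \im(c) = y_0$ can hold for more than one $y_0$ only if $\lambda = 1$. Hence the holonomy around $\gamma_C$ is trivial and $C$ is a Euclidean cylinder.

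Finally, choose a power $\varphi^N$ that fixes each saddle connection of $\Gamma$ setwise. Since $\varphi^N$ is real affine on each saddle connection and fixes both of its endpoints (cone points), it is the identity on $\Gamma$. Thus $\varphi^N$ restricts to each Euclidean cylinder $C_i$ as a homeomorphism fixing $\partial C_i$ pointwise, and since the mapping class group of a cylinder relative to its boundary is the infinite cyclic group generated by the Dehn twist about the core curve $\gamma_i$, we get that $\varphi^N|_{C_i}$ is isotopic rel boundary to $T_{\gamma_i}^{k_i}$ for some $k_i \in \Z$. Combining over all components, $\varphi^N$ is isotopic to the standard Dehn multitwist $\prod_i T_{\gamma_i}^{k_i}$.

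The main obstacle is the first step, verifying that complementary regions are cylinders. For translation surfaces this is a standard compactness argument showing that the absence of infinite horizontal separatrices forces every horizontal leaf to be closed. One must check that this goes through in the dilation setting, where local dilation factors could in principle allow a non-closed leaf to spiral around a minimal set without contradicting Lemma \ref{lem: parabolic}; ruling out such behavior is the technical heart of the proof.
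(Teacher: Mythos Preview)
Your claim in the first step---that every horizontal leaf in a complementary component $C$ is closed---is false in the dilation setting, and this is not a technicality that can be patched. When the holonomy around the core curve of $C$ is nontrivial, the horizontal foliation of $C$ is a Reeb foliation: the two boundary cycles of $C$ are closed horizontal curves, but every interior leaf spirals toward them and is not closed. Poincar\'e--Bendixson tells you only that the $\omega$-limit set of an interior leaf is a closed leaf, not that the leaf itself is closed. Your second step then collapses, since it uses a continuous family of closed interior leaves to force $\lambda = 1$; in the Reeb case there is no such family, and indeed $\lambda \neq 1$.

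The paper's proof does not try to rule out the nontrivial-holonomy case; it handles it directly. After cutting along the horizontal saddle connections, the complement consists of foliated annuli (an Euler characteristic count, since the horizontal foliation is nonsingular on each piece). If an annulus has trivial core-curve holonomy it is Euclidean and your argument applies: $\varphi^n$ is a power of a Dehn twist there. If the holonomy is nontrivial, the piece is a dilation cylinder of angle some multiple of $\pi$ carrying a Reeb-type horizontal foliation, and the parabolic $\varphi^n$ fixes its boundary pointwise while shearing the interior by a map isotopic rel boundary to the identity. Assembling the pieces, $\varphi^n$ is a standard Dehn multitwist supported only on the Euclidean cylinders. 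So the spiraling behavior you flagged as an obstacle is not to be excluded but accommodated: it contributes trivially to the multitwist.
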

\begin{proof} Let $\varphi \in \Aff(X, \rho, q)$ be such that $D \varphi$ is parabolic. Without loss of generality, we may assume that the horizontal direction is the direction of shearing. Then, by Lemma \ref{lem: parabolic}, we know that every horizontal separatrix of $(X, \rho, q)$ is a saddle connection. If we cut the surface along the horizontal saddle connections, the remaining pieces are foliated by the horizontal foliation and have no singularites. This implies by Euler characteristic considerations that each piece of the cut-apart surface is a foliated annulus.
	
	We saw in the proof of Proposition \ref{prop:parabolic} that some power $\varphi^n$ of $\varphi$ must fix every horizontal saddle connection and therefore we have that $$D \varphi^n = \begin{bmatrix} 1 & c \\ 0 & 1 \end{bmatrix}$$ in the neighborhood of every horizontal saddle connection.

	For each annuluar region, if the holonomy around the central curve is trivial, then the annulus must be a Euclidean annulus, as shown in Figure \ref{fig:euc_annulus}. The parabolic element $\varphi^n$ will then be a Dehn twist around the central curve of the cylinder. 
	
	\begin{figure}[ht]
	\begin{center}
		\begin{tikzpicture}
			\draw[thick] (0,0) -- (4,0); 
			\draw[thick] (0,2) -- (4,2); 
			\draw[thick, mid = {>}] (0,0) -- (0,2); 
			\draw[thick, mid = {>}] (4,0) -- (4,2); 
			\draw[pattern=horizontal lines,draw=none, pattern color = {rgb,255:red,0; green,128; blue,128}] (0,0) rectangle (4,2); 
		\end{tikzpicture} 
	\end{center}
	\caption{A Euclidean annulus with its horizontal foliation.} 
	\label{fig:euc_annulus}
	\end{figure}
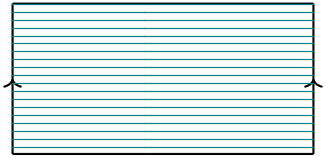
	
	If the holonomy around the central curve is nontrivial, then it must be foliated with a Reeb foliation. The horizontal boundary curves must then be closed curves that the Reeb foliation accumulates on. The annuluar region is then a dilation cylinder of angle $\pi n$, and the associated foliation is $n$ copies of the Reeb foliation glued together along their boundary curves. Figure \ref{fig:dil_annulus} shows a Reeb-foliated annulus and the corresponding dilation cylinder.

	\begin{figure}[ht]
	\begin{center}
		\includegraphics[width=50mm]{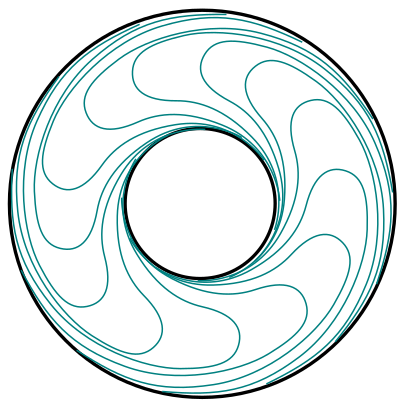}
		\hspace{1cm}
		\begin{tikzpicture}[baseline=-7ex]
			\scope 
			\clip (-3, 0) rectangle (3,3);
			\scope
			\clip (0,0) circle (2.51);
			\draw[pattern=horizontal lines,draw=none, pattern color = {rgb,255:red,0; green,128; blue,128}] (0,0) circle (2.5);
			
			\draw[thick] (0,0) circle [radius = 2.5];
			\draw[thick,fill=white] (0,0) circle [radius = 1];
			\draw[thick] (-2.5,0) -- (-1,0);
			\draw[thick] (2.5,0) -- (1,0);
			\endscope
			\endscope 
			\draw[thick, mid = {>}] (-0.01,1) -- (0.01,1); 
			\draw[thick, mid = {>}] (-0.01,2.5) -- (0.01,2.5); 
		\end{tikzpicture}
		\end{center}
		\caption{On the left, we have an annulus with a Reeb foliation. On the right, we see a dilation cylinder of angle $\pi$ whose horizontal foliation gives rise to a Reeb foliation. }
		\label{fig:dil_annulus}
		\end{figure}
	
	Then, we see that $\varphi^n$ must act on dilation cylinders by fixing the boundary horizontal boundary curves and shearing the rest of the cylinder by a map that is isotopic to the identity.  
		
	Hence, $\varphi^n$ fixes the horizontal saddle connections, is a standard Dehn twist on the Euclidean cylinders in the complement of these saddle connections, and is isotopic to the identity on the cylinders with holonomy. This implies that $\varphi^n$ is a Dehn multitwist. 
\end{proof}

Finally, we wish to understand what topological maps $\varphi \in \Aff(X, q, \rho)$ could be if $D\varphi$ is hyperbolic. We will achieve this through a series of propositions. The first one below will consider what the surface $(X,q,\rho)$ must look like if $D \varphi$ is hyperbolic and $\varphi$ fixes a saddle connection. 

\begin{prop} 
	\label{prop:hyperbolic} If $\varphi$ is in $\Aff(X, \rho, q)$ such that $D \varphi$ is hyperbolic and $\varphi$ fixes a saddle connection, then $(X, \rho, q)$ can be decomposed into finitely many dilation cylinders of angle $\frac{\pi n}{2}$ such that $\varphi$ is a Dehn multitwist along the core curves of these dilation cylinders. 
\end{prop}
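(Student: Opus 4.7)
The plan is to parallel the proof of Proposition \ref{prop:parabolic}, with the fixed saddle connection $\sigma$ playing the role that the shearing direction played in the parabolic case. After replacing $\varphi$ by a suitable power, I may assume $\varphi$ fixes every cone point; since any $\varphi$-invariant saddle connection must lie in an eigendirection of the hyperbolic $D\varphi$ (otherwise it would be rotated off itself), I normalize coordinates so that the expanding eigendirection is horizontal and $\sigma$ runs horizontally between two fixed cone points $p$ and $p'$.

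The central step is to show that every horizontal separatrix of $(X, \rho, q)$ is in fact a saddle connection. In a developing chart around $p$ with $\sigma$ developed horizontally to $p'$, the requirement $\varphi(p')=p'$ pins down the otherwise ambiguous scale of the local derivative and forces $\varphi$ to have local derivative $\operatorname{diag}(1, \lambda^{-2})$, where $\lambda>1$ is the expanding eigenvalue of $D\varphi$ as an element of $\text{PSL}(2,\mathbb{R})$. In particular, $\varphi$ is pointwise fixed on every horizontal separatrix emanating from $p$. Exactly as in Lemma \ref{lem: parabolic}, if any such separatrix were infinite, compactness of $X$ would produce a sequence on it converging to some point $q$, near which $\varphi$ would be an affine map with an accumulation of fixed points in a single direction; linearity would then force $\varphi$ to be the identity in a neighborhood of $q$ and hence globally, contradicting that $D\varphi$ is hyperbolic. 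Propagating the same anchored chart argument from $p$ to each other endpoint of a newly produced saddle connection --- each such endpoint now has its own fixed horizontal saddle connection to play the role of $\sigma$ --- exhausts every cone point of $X$, so that every horizontal separatrix is a saddle connection.

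The remainder of the argument follows the outline of Proposition \ref{prop:parabolic}. Cutting $X$ along the finite set of horizontal saddle connections yields a disjoint union of regions foliated by horizontals with no interior singularities, each of which is a topological annulus by the Euler characteristic argument. No component can be a Euclidean cylinder: a self-map of a Euclidean annulus preserving its flat structure up to global scale must have parabolic derivative, since it would otherwise scale the core circumference by $\lambda \neq 1$; this contradicts the hyperbolicity of $D\varphi$. Hence every component is a dilation cylinder with nontrivial holonomy around its core curve. On each such cylinder $C$, the map $\varphi$ is the identity on the horizontal boundary saddle connections and has derivative matching (a power of) the holonomy of the core; lifting to the universal cover of $C$ as a strip with scaling deck transformation, the only self-map with these properties, up to isotopy rel boundary, is a power of the Dehn twist around the core curve of $C$. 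Gluing these local twists across the horizontal saddle connections yields the claimed global Dehn multitwist, and the cone angles at the endpoints of the saddle connections determine the specific angles $\pi n /2$ of the cylinders.

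The main obstacle is the propagation argument in the middle step: one must verify that the iterative anchoring genuinely reaches every cone point of $X$, ruling out the possibility of a stray cone point all of whose horizontal separatrices are infinite and whose horizontal component is disjoint from $\sigma$. I would handle this either by using the discreteness of the saddle-connection set together with the contracting behavior of $\varphi$ in the vertical direction to rule out infinite separatrices altogether at such a cone point, or by showing that the horizontal foliation restricted to the complement of the already-produced saddle connection graph is singularity-free, so that no new cone points can occur outside the graph.
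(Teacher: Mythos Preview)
Your central claim --- that every horizontal separatrix of $(X,\rho,q)$ is a saddle connection --- is false in general, and this is where the argument breaks. Consider a dilation cylinder of angle $\pi/2$: one boundary is a cycle of horizontal saddle connections, the other a cycle of vertical ones. A horizontal separatrix emanating from a cone point on the \emph{vertical} boundary enters the cylinder and spirals forever toward the horizontal boundary; it is infinite. Such cylinders do occur in the very surfaces the proposition is describing (for instance the exotic Dehn-twist examples of Theorem~\ref{thm:dehn}, where the quarter-annuli between horizontal and vertical slits are $\pi/2$-cylinders). So neither of your proposed patches can succeed: the propagation cannot reach those cone points along horizontal separatrices, and the complement of the horizontal saddle-connection graph genuinely contains singularities. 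Relatedly, your final sentence about cone angles ``determining'' the cylinder angles $\pi n/2$ is not correct; cutting only along horizontals would, at best, yield cylinders of angle $\pi n$.

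The paper's proof avoids this by \emph{not} trying to show all horizontal separatrices are finite. Instead it works outward from the initial fixed saddle connection: the connected component of the horizontal saddle-connection graph through $\sigma$ consists of pointwise-fixed saddle connections (your anchoring argument is fine for this much), and the ribbon-graph boundary cycles $C$ of that component are attracting or repelling for the horizontal foliation, forcing a dilation cylinder on one side of each $C$. The key step you are missing is that the \emph{far} boundary of such a cylinder must be a cycle of saddle connections in one of the two eigendirections --- possibly vertical --- because $\varphi$ preserves the cylinder and only fixes those two directions. One then restarts the argument from these newly discovered (possibly vertical) fixed saddle connections. It is this horizontal/vertical alternation through the cylinder structure, not a global finiteness statement about horizontal separatrices, that exhausts the surface and produces the $\pi n/2$ angles.
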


\begin{proof} Up to conjugacy and scaling, we may assume that $D \varphi$ is a positive diagonal matrix and the fixed saddle connection is horizontal. Then, if we look at a chart in a neighborhood of the fixed saddle connection, we have that in those coordinates, $$D \varphi = \begin{bmatrix} 1 & 0 \\ 0 & \lambda\end{bmatrix}$$ with $\lambda > 0$. Let $p$ and $q$ be the (not necessarily distinct) endpoints of the fixed saddle connection. This then tells us that $\varphi$ must also fix pointwise every horizontal separatrix emanating from $p$ or $q$. If any of these separatrices were infinite, then the separatrix would have an accumulation point $s$ that must also be fixed by $\varphi$. In any small neighborhood $U$ of this point $s$, $\varphi$ would be locally linear and fix pointwise this point and any segment of the separatrix passing through $U$, and therefore $D \varphi = I$, the identity matrix. This would then mean that $\varphi$ is the identity map locally and therefore everywhere on the surface. 
	
	Thus we have shown that every horizontal separatrix emanating from either $p$ or $q$, the endpoints of the fixed saddle connection, must be a saddle connection and fixed pointwise by $\varphi$. We can then apply this argument inductively to show that in the graph with vertices the singularities and edges the horizontal separatrices of $(X, \rho, q)$, the connected component containing the original fixed saddle connection consists entirely of saddle connections fixed by $\varphi$. 
	
	Now, let us consider a ribbon graph coming from a neighborhood of this connected component of the graph of horizontal saddle connections. Each boundary component of the ribbon graph is then homotopic to a cycle $C$ of horizontal saddle connections. In a one-sided neighborhood of each cycle $C$, we have that $\lambda > 0$ in the matrix $D\varphi$ as above. If $\lambda = 1$, then $D \varphi = I$ and $\varphi$ would be the identity map. Otherwise when $\lambda \neq 1$, $C$ is attracting or repelling for the horizontal foliation and there must be a nearby non-horizontal periodic trajectory $\gamma$ homotopic to $C$ that is the core curve of a dilation cylinder, one of whose boundary components is $C$. The other boundary component of the cylinder must be either a cycle of vertical or horizontal saddle connections, since these are the only directions fixed by $\varphi$. We show a schematic of this is Figure \ref{fig:ribbon_graph}.
	
	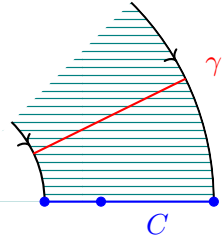
\begin{figure}[ht]
	\begin{center}
		\vspace{-2cm}
		\begin{tikzpicture}[scale=1.5]
			\scope 
			\clip (0, 0) -- (3,0) -- (3,3) -- (0,0);
			\scope
			\clip (0,0) circle (2.51);
			\draw[pattern=horizontal lines,draw=none, pattern color = {rgb,255:red,0; green,128; blue,128}] (0,0) circle (2.5);
			
			\draw[thick] (0,0) circle [radius = 2.5];
			\draw[thick,fill=white] (0,0) circle [radius = 1];
			\endscope
			\endscope 
			\draw[thick, mid = {>}] (.86,.505) -- (.87,.495); 
			\draw[thick, mid = {>}] (2.16,1.255) -- (2.17,1.245); 
			\draw[thick,blue] (2.5,0) -- (1,0);
			\draw[fill, blue] (1,0) circle [radius = 0.04];
			\draw[fill,blue] (1.5,0) circle [radius = 0.04];
			\draw[fill,blue] (2.5,0) circle [radius = 0.04];
			\node[blue] at (2,-.2) {$C$};
			\draw[thick, red] (.9, .425) -- (2.25,1.09);
			\node[red] at (2.5, 1.2) {$\gamma$};
		\end{tikzpicture}
	\end{center}
	\caption{A cycle $C$ of the graph of horizontal saddle connections that is homotopic to a boundary component $\delta$ of the ribbon graph, and a curve $\gamma$ that is a core curve of the dilation cylinder on that portion of the ribbon graph. }
	\label{fig:ribbon_graph}
	\end{figure}

	We can then repeat this process with any fixed horizontal or vertical saddle connection that we have found so far to eventually find a closed subsurface made up of dilation cylinders of angle $\frac{\pi n}{2}$ contained in $(X, \rho, q)$ Since $(X, \rho, q)$ is connected, this closed subsurface must be the whole surface and therefore $(X, \rho, q)$ decomposes into dilation cylinders of angle $\frac{\pi n}{2}$. Since $\varphi$ fixes each boundary component of each of these dilation cylinders, it must act by some multiple of a Dehn twist along the core curve of each of these cylinders.
\end{proof} 

Building off of this proposition, we can now determine more precisely what the topology of a reducible map $\varphi \in \Aff(X,\rho,q)$ with $D \varphi$ hyperbolic can be be. 

\begin{prop} 
	\label{prop:hyperbolic2}
	If $\varphi$ is in $\Aff(X, \rho, q)$ such that $D \varphi$ is hyperbolic and $\varphi$ is reducible, then some power $\varphi^n$ of $\varphi$ is an exotic Dehn multitwist. 
\end{prop}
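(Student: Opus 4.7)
The plan is to reduce to Proposition \ref{prop:hyperbolic} by producing, after passing to a power, an affine automorphism that fixes a saddle connection. After conjugating and scaling the derivative I may assume $D\varphi = \mathrm{diag}(\lambda, \lambda^{-1})$ with $\lambda > 1$, so that the only lines in $\R^2$ preserved by $D\varphi$ are the horizontal and the vertical.

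Since $\varphi$ is reducible, it preserves a nonempty collection of disjoint simple closed curves up to isotopy, and by replacing $\varphi$ with a suitable power I may assume each component $\gamma$ of this multicurve is individually fixed up to isotopy. My first main task is to upgrade this topological fixed curve to a fixed saddle connection. Pick such a $\gamma$ and examine its geodesic representative in the flat structure: either $\gamma$ is isotopic to the core curve of a maximal affine or Euclidean cylinder $C$, in which case the boundary of $C$ is a nonempty union of saddle connections that $\varphi$ must preserve setwise; or $\gamma$ is not isotopic to a cylinder core, in which case it has a geodesic representative that is a finite concatenation of saddle connections $s_1, \dots, s_k$. In both cases, $\varphi$ permutes a finite nonempty collection of saddle connections, so some further power $\varphi^n$ fixes at least one such saddle connection $s$ setwise, and after one more power (to fix its endpoints and orientation) also pointwise at its endpoints. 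In local coordinates at a fixed endpoint of $s$, the line containing $s$ is preserved by the hyperbolic matrix $D\varphi^n$, which forces $s$ to be horizontal or vertical.

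Now that a power of $\varphi$ fixes a saddle connection, Proposition \ref{prop:hyperbolic} applies directly: $(X,\rho,q)$ decomposes into finitely many dilation cylinders of angle $\tfrac{\pi n}{2}$, and $\varphi^n$ acts as a Dehn multitwist along their core curves. To conclude that this Dehn multitwist is exotic rather than standard, observe that a standard Dehn multitwist arises from shearing Euclidean cylinders and therefore has parabolic derivative (this is the picture of Proposition \ref{prop:parabolic}), whereas $(D\varphi)^n$ is still hyperbolic. The core curves of the cylinders in the decomposition must therefore carry nontrivial holonomy, so $\varphi^n$ is an exotic Dehn multitwist.

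The step I expect to be the main obstacle is producing the fixed saddle connection from the $\varphi^n$-invariant isotopy class of $\gamma$. On translation surfaces, every essential simple closed curve has a well-behaved geodesic representative that is either a cylinder core or a finite concatenation of saddle connections, but on dilation surfaces the possibility of Reeb-type dynamics in affine cylinders makes this more subtle: if $\gamma$ lies inside a Reeb-foliated dilation cylinder, the ``geodesic representative'' is naturally taken to be a boundary component of a maximal such cylinder, and one must justify that this boundary is itself a union of saddle connections fixed by a suitable power of $\varphi$. Making this rigorous will likely require a ribbon-graph argument in the spirit of the proof of Proposition \ref{prop:hyperbolic}, handling separately the cases according to the local foliated structure near $\gamma$.
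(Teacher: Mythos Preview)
Your approach is essentially the same as the paper's: reduce to Proposition~\ref{prop:hyperbolic} by showing that a power of $\varphi$ fixes a saddle connection, via the dichotomy that the invariant curve is either a cylinder core (so the boundary saddle connections are permuted) or has a geodesic representative made of saddle connections. The paper makes exactly this case split and then invokes Proposition~\ref{prop:hyperbolic}; your additional remark that the resulting multitwist must be exotic (since a standard multitwist would force parabolic derivative) is a nice clarification that the paper leaves implicit in the phrase ``core curves of a collection of dilation cylinders.'' The subtlety you flag about geodesic representatives on dilation surfaces is precisely the point the paper passes over without further comment.
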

\begin{proof} Suppose that $\varphi$ is an affine automorphism of $(X, \rho, q)$, that $D \varphi$ is hyperbolic, and that $\varphi$ is reducible. Then, $\varphi$ fixes a set $\mathcal{C} = \{C_1, C_2, \ldots, C_k\}$ of topological curves on the surface and some power $\varphi^m$ will send each curve $C_i$ to itself topologically and preserve its orientation. 
	
	Let us focus our attention on the curve $C_1$. On the surface $(X, \rho, q)$, $C_1$ is either the core curve of a dilation cylinder or a Euclidean cylinder, or has a unique geodesic representative as a collection of saddle connections. In the former case where $C_1$ is the core curve of a cylinder, $\varphi^m$ must then send the cylinder to itself and therefore fixes each boundary component of the cylinder. Since the boundary components of cylinders are made up of saddle connections, some power of $\varphi^m$ must then fix a saddle connection. In the latter case when $C_1$ has a unique geodesic representative as a collection of saddle connections, $\varphi^m$ must take this set of saddle connections to itself and therefore some power of $\varphi^m$ must fix a saddle connection. 
	
	Thus, we have shown that some power $\varphi^n$ of $\varphi$ fixes a saddle connection. Then, by Proposition \ref{prop:hyperbolic}, we have that $\varphi^n$ is a Dehn multitwist along the core curves of a collection of dilation cylinders. 
\end{proof}

Finally, we have the following classification result that shows what the underlying topological maps of affine automorphisms of dilation surfaces can look like. 

\classification 

\begin{proof} This theorem follows directly from Proposition \ref{prop:trace}, Proposition \ref{prop:parabolic}, and Proposition \ref{prop:hyperbolic2}.
\end{proof}

At first glance, comparing the classification results for translation surfaces in Proposition \ref{prop:classification_translation} and dilation surfaces in Theorem \ref{thm:classification_dilation}, it would look like similar mapping class group elements can be represented in the affine automorphism groups of translation surfaces and dilation surfaces are very similar. 

A question that one could ask is whether the set of mapping class group elements realizable by affine automorphisms of translation surfaces is exactly equal to the set of mapping class group elements realizable by affine automorphisms of dilation surfaces with non-trivial holonomy. The answer to this question is no. We notice that any Dehn multitwist on a translation surface must be comprised of only left twists or only right twists, whereas a Dehn multitwist on a dilation surface arising from a hyperbolic element can consist of both left twists and right twists. 

%
%
%

\section{Realizing Elements of the Mapping Class Group}
\label{sec:elements}

While Theorem \ref{thm:classification_dilation} shows that every affine automorphism of a dilation surface is finite order, pseudo-Anosov, or has some power that that is a Dehn multitwist, it doesn't say that every such topological map is realizable in the $\Aff(X, \rho, q)$  for some dilation surface. In this section, we'll start by investigating this question: 

\textbf{Question:} Given a topological map $\psi$ of a genus $g \geq 2$ surface, when can $\psi$ be realized as an affine automorphism $\varphi$ of some dilation surface $(X, \rho, q)$?

We first note that there is necessary condition for $\psi$ to be the underlying topological map to some $\varphi \in \Aff(X, \rho, q)$. We need that $\psi^* \rho = \rho$.  That is, we need that $\psi$ preserves the holonomy $\rho$. Therefore, a more refined question that we could ask is the following: 

\textbf{Question:} Given a topological map $\psi$ of a genus $g \geq 2$ surface and a compatible holonomy representation $\rho$ such that $\psi^* \rho = \rho$, when can $\psi$ be realized as an affine automorphism $\varphi$ of some dilation surface $(X, \rho, q)$?

In this section, we will show that if $\psi$ is finite order or pseudo-Anosov and $\rho$ is a compatible holonomy representation, then $\psi$ is realized in the affine automorphism group of some dilation surface with holonomy $\rho$. 

\subsection{Realizing finite order elements}

Let us start by considering finite order topological maps $\psi$ of genus $g \geq 2$ surfaces. Then, we have the following. 

\begin{prop} Let $\psi$ be a finite order mapping class group element and $\rho : \pi_1(\Sigma) \rightarrow \mathbb{R}_+$ be a compatible holonomy so that $\psi^*\rho = \rho$. Then, there exists a dilation surface with holonomy $\rho$ and with $\psi$ the underlying topological map of some $\varphi$ in $\Aff(X, \rho, q)$. 
\end{prop}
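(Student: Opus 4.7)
The plan is to combine Nielsen realization with a diagonalization argument on the space of twisted quadratic differentials. Since $\psi$ is of finite order $n$, a version of the Nielsen realization theorem (Nielsen for cyclic groups, Kerckhoff in general) produces a complex structure $X$ on $\Sigma$ with respect to which $\psi$ is represented by a biholomorphic automorphism, still of order $n$. The assumption $\psi^*\rho = \rho$ then guarantees that $\psi$ lifts (uniquely up to an overall nonzero scalar) to a holomorphic bundle automorphism of the flat line bundle $L_\rho$, and therefore also to a holomorphic automorphism of $K_X^2 \otimes L_\rho^2$. Pullback along this lift defines a linear action of the cyclic group $\langle \psi \rangle$ on the finite-dimensional complex vector space $QD_\rho(X) = H^0(X, K_X^2 \otimes L_\rho^2)$, whose dimension, as noted in the paper, depends only on the genus and is positive for $g \geq 2$.

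Because a finite-order linear operator on a complex vector space is diagonalizable, I can choose a nonzero eigenvector $q \in QD_\rho(X)$ with $\psi^* q = \zeta q$ for some $n$-th root of unity $\zeta$. The triple $(X, \rho, q)$ is then a dilation surface, with cone points at the zeros and simple poles of $q$; these are permuted by $\psi$ because the eigenvector relation preserves the divisor of $q$. To see that $\psi$ is an affine automorphism of this dilation surface, I would work in the flat coordinate $w = \int q^{1/2}$ away from singularities: the identity $\psi^* q = \zeta q$ reads $\psi(w) = \pm \sqrt{\zeta}\, w + c$ locally, which is real affine with derivative a root of unity, hence elliptic and of finite order. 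This is exactly consistent with the elliptic-derivative case of Theorem \ref{thm:classification_dilation}.

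The main step that requires care is the lifting of $\psi$ to $L_\rho$: I would verify that $\psi^*\rho = \rho$ is precisely the right hypothesis for such a lift to exist. In the model $L_\rho = \widetilde{X} \times \C / \!\sim$ with $(x,z) \sim (\gamma x, \rho(\gamma) z)$, any lift $\widetilde{\psi}$ of $\psi$ to $\widetilde{X}$ induces a candidate map $(x, z) \mapsto (\widetilde{\psi}(x), z)$, and compatibility with the twisted gluing amounts exactly to the identity $\rho(\psi_* \gamma) = \rho(\gamma)$ for all $\gamma \in \pi_1(\Sigma)$. Once this is in place, the diagonalization and the verification that $\psi$ acts affinely run directly, and the scalar ambiguity in the lift only rescales the eigenvalue $\zeta$ without affecting the existence of an eigenvector.
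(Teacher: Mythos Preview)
Your proposal is correct and follows essentially the same approach as the paper: Nielsen realization to get a biholomorphic $\psi$ on some $X$, pullback to obtain a finite-order linear action on the space of twisted quadratic differentials, then take an eigensection. Your version is more careful than the paper's in two places---you explicitly justify why $\psi^*\rho=\rho$ yields a lift to $L_\rho$, and you verify in the flat coordinate $w=\int q^{1/2}$ that the eigensection relation makes $\psi$ real affine---but the underlying argument is the same.
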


\begin{proof} If $\psi$ is a finite order mapping class group element, then by the Nielsen realization theorem, $\psi$ can be realized as some finite order automorphism $\varphi$ of a Riemann surface $X$. This automorphism $\varphi$ must also satisfy that $\varphi*\rho = r$. Therefore, $\varphi$ induces a linear map $\varphi* : K_X^2 \otimes L_\rho^2 \rightarrow K_X^2 \otimes L_\rho^2$ that takes meromorphic sections with poles of order at most one to meromorphic sections with poles of order at most one. 
	
	We can then find an eigensection $q$ of $K_X^2 \otimes L_\rho^2$ such that $\varphi^* q = \lambda q$ for some root of unity $\lambda$. It follows then that $\varphi$ induces a finite order affine automorphism on the dilation surface $(X, \rho, q)$, and thus $\psi$ is realized in $\Aff(X, \rho, q)$ as the map $\varphi$. 
	
\end{proof}

Now that we have established the realizability of finite order mapping class group elements in the affine automorphism groups of dilation surfaces, let us shift our attention to another important class of maps: pseudo-Anosov elements. 

\subsection{Realizing pseudo-Anosov elements}
\label{subsec:pA}

Pseudo-Anosov maps are some of the more dynamically interesting elements in the mapping class group, but it can be difficult to actually come up with examples of pseudo-Anosov maps. One way to do so, however, is to realize pseudo-Anosov maps in the affine automorphism groups of translation surfaces. It is well-known that any pseudo-Anosov map $\psi$ can be realized as $\varphi \in \Aff(X,q)$ for some half-translation surface $(X,q)$.  To show this, one uses a standard construction of creating a quadratic differential on a Riemann surface out of two filling measured foliations (see \cite{Wr2}, Theorem 2.12 for a reference), 
$$\mathcal{MF} \times \mathcal{MF}\backslash \Delta \rightarrow \mathcal{QD}.$$ 

Furthermore, if $\varphi \in \Aff(X,q)$ has that $|\text{tr}(D\varphi)| > 2$, then $\varphi$ is topologically a pseudo-Anosov map and the eigendirections of $D\varphi$ give the directions of the stable and unstable foliations of $\psi$ on the surface. 

We can also ask if pseudo-Anosov maps can be realized as affine automorphisms of dilation surfaces and what the significance of that would be. We first note that if $\varphi \in \Aff(X,\rho,q)$ is a pseudo-Anosov map, then we established in Theorem \ref{thm:classification_dilation} that  $|\text{tr}(D\varphi)| > 2$. The eigendirections of $D\varphi$ will then correspond to the expanding and contracting foliations of $\varphi$, but now the measures on these foliations are twisted by $\rho$. That is, the transverse measures on these foliations coming from $q$ can be understood locally, but globally will scale by the holonomy $\rho$ as we move around loops on the surface. In this way, realizing pseudo-Anosov maps as affine automorphisms of dilation surfaces helps us find twisted measured foliations, the set of which we denote $\mathcal{MF}_\rho$, that are preserved up to scale by the map. 

Given a pseudo-Anosov map $\psi$, we may also want to know if it actually is underlying topological map of an affine automorphism of some diation surface. If $\psi$ is a pseudo-Anosov map, then a necessary condition for $\psi$ to be realized as an affine automorphism of a dilation surface with holonomy $\rho$ is that $\psi^* \rho = \rho$. The following theorem, with slightly modified language, is proved by McMullen in \cite{M2}: 

\begin{thm}[McMullen] Given any pseudo-Anosov map $\psi: \Sigma \rightarrow \Sigma$ and compatible holonomy $\rho$, $\psi$ preserves up to scale a unique pair of twisted measured foliations in $\mathcal{MF}_\rho$. The underlying topological foliations correspond to the (untwisted) stable and unstable foliations of $\rho$.  
\end{thm}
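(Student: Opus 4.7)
My plan is to reduce both existence and uniqueness to a finite-dimensional Perron--Frobenius argument, paralleling the standard proof in the untwisted setting. First, observe that any $(\mathcal{F},\nu) \in \mathcal{MF}_\rho$ preserved up to scale by $\psi$ must have its underlying topological foliation $\mathcal{F}$ fixed by $\psi$ up to isotopy. Since $\psi$ is pseudo-Anosov, the only such topological foliations are the (untwisted) stable and unstable foliations $\mathcal{F}^s$ and $\mathcal{F}^u$. So the problem reduces to showing that the fiber of the forgetful map $\mathcal{MF}_\rho \to \mathcal{MF}$ over each of $\mathcal{F}^u$ and $\mathcal{F}^s$ contains a unique-up-to-scale $\psi$-eigenvector.

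To analyze the fiber over $\mathcal{F}^u$, I would use a train track $\tau$ carrying $\mathcal{F}^u$. A $\rho$-twisted transverse measure can be encoded, after lifting to the universal cover, as non-negative branch weights on the lift $\tilde{\tau}$ that are $\rho$-equivariant under deck transformations and satisfy the usual switch conditions. This data descends to a non-negative section over the branches of $\tau$ of the local system associated to $\rho$, cut out by the switch conditions, and so lives in a finite-dimensional cone $V_\rho^u$. Because $\psi^*\rho = \rho$ and $\psi$ preserves $\mathcal{F}^u$ up to isotopy, after replacing $\tau$ by a sufficient splitting sequence so that $\psi$ carries $\tau$ onto a sub-train-track of itself, pullback by $\psi$ acts on $V_\rho^u$ by a non-negative linear map.

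Minimality of the pseudo-Anosov foliation makes this map primitive, so some iterate sends $V_\rho^u$ into its own interior. The Krein--Rutman form of Perron--Frobenius then supplies a unique up-to-scale eigenvector in the interior, with eigenvalue $\lambda > 1$. That eigenvector is the desired twisted unstable foliation. The analogous argument for $\mathcal{F}^s$ with eigenvalue $\lambda^{-1}$ produces the stable one, so we obtain a unique pair of twisted measured foliations preserved up to scale, with the correct underlying topological foliations.

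The main obstacle is verifying that $V_\rho^u$ is non-empty to begin with: a priori the combined twisting and switch conditions could be inconsistent. I would handle this by constructing an explicit $\rho$-twisted transverse measure directly on the universal cover: take the standard ($\pi_1$-invariant) transverse measure to $\tilde{\mathcal{F}}^u$ and multiply it by a positive leafwise-constant function $f : \tilde\Sigma \to \mathbb{R}_+$ satisfying $f \circ \gamma = \rho(\gamma) f$ for every deck transformation $\gamma$. Since $f$ is required to be leafwise constant, it descends to a function on the leaf space, which for a pseudo-Anosov foliation is an $\mathbb{R}$-tree with an isometric $\pi_1(\Sigma)$-action. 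Producing $f$ then amounts to exhibiting a $\rho$-equivariant positive function on this tree, which can be built by exponentiating a real-valued $1$-cocycle derived from $\log\rho$ along a Busemann-type coordinate coming from the $\psi$-invariant axis in the tree.
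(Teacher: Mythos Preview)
The paper does not supply its own proof of this statement: it is quoted as a theorem of McMullen \cite{M2} and then used as input to conclude that pseudo-Anosov maps with compatible holonomy are realizable in $\Aff(X,\rho,q)$. So there is nothing in the paper to compare your argument against directly.

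That said, your outline is the natural one and is essentially how such results are proved: reduce to the fibers over $\mathcal{F}^u$ and $\mathcal{F}^s$, encode $\rho$-twisted transverse measures as non-negative vectors in a finite-dimensional twisted weight space $V_\rho^u$ for a carrying train track, and apply a Perron--Frobenius/Krein--Rutman argument to the induced action of $\psi$. The reduction step and the primitivity coming from minimality of the pseudo-Anosov foliation are fine.

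The genuine gap is exactly where you flag it, and your proposed fix does not close it. You want a positive function $f$ on the dual $\mathbb{R}$-tree $T$ satisfying $f(\gamma\cdot x)=\rho(\gamma)f(x)$, and you propose to obtain it by exponentiating a Busemann-type function along the $\psi$-axis. But a Busemann function $b$ on $T$ satisfies $b(\gamma\cdot x)-b(x)=c(\gamma,x)$ with $c$ genuinely depending on $x$ as well as on $\gamma$ (think of elements $\gamma$ whose axes are far from the basepoint ray: the Busemann cocycle is not a homomorphism of $\pi_1$). So $e^{b}$ is not $\rho$-equivariant for any nontrivial $\rho$, and there is no evident way to correct it while keeping $f$ leafwise constant and positive. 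In short, the existence of a point in the cone $V_\rho^u$ is the heart of the theorem, and the Busemann construction as described does not produce one. A correct argument has to either build the twisted measure by a different mechanism (for instance via the transfer operator on the twisted cohomology of the mapping torus, which is closer to McMullen's viewpoint) or argue more carefully that the Perron--Frobenius eigenvector of the full (non-negative, primitive) branch transition matrix automatically satisfies the twisted switch conditions.
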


This theorem essentially shows that pseudo-Anosov maps $\psi$ are realizable as $\varphi \in \Aff(X,\rho,q)$ when $\psi^* \rho = \rho$, since one can use a similar construction as in the untwisted case above to create a twisted quadratic differential out of the two elements of $\mathcal{MF}_\rho$ that are fixed up to scale by $\psi$. Then, the twisted quadratic differential $q \in \mathcal{QD}_\rho$ that we created gives a dilation surface on which $\varphi$ acts by an affine automorphism. 

\section{Twisted quadratic differentials coming from pairs of multicurves}
\label{sec:multicurves}

We will now move on from realizing single elements of the mapping class group as affine automorphisms of dilation surfaces to realizing certain subgroups of the mapping class group. In particular, we will generalize a construction of Thurston that realizes pairs of Dehn multitwists in the affine automorphism groups of translation surfaces. 

One reason we might want to do such a thing is that we want to understand if dilation surfaces with nontrivial holonomy representations can have lattice Veech groups, which is currently an open question. If we could realize ``large" subgroups of the mapping class group in $\Aff(X,\rho,q)$, then such surfaces could be candidate surfaces for having lattice Veech groups. 

Another reason to generalize Thurston's construction to dilation surface is that it is a first step to understanding when two twisted measured foliations can be pieced together to form a twisted quadratic differential. As discussed in Section \ref{subsec:pA}, there is a well-known homeomorphism from pairs of untwisted measured foliations on a surface (minus some set $\Delta$, known as the diagonal) to the space of quadratic differentials on that surface: $\mathcal{MF} \times \mathcal{MF} \backslash \Delta \rightarrow \mathcal{QD}$. A natural question to ask then is if there is a similar map $\mathcal{MF}_\rho \times \mathcal{MF}_\rho \backslash \Delta \rightarrow \mathcal{QD}_\rho$ from pairs of twisted measured foliations minus some diagonal set to the space of twisted quadratic differentials. The construction for pairs of multicurves that we will establish in this section is a step in the direction of resolving this problem.

\subsection{Thurston's construction and an example} The main result of this section states roughly that we can construct dilation surfaces with Dehn multitwists in their affine automorphism groups coming from a pair of filling multicurves, as long as the holonomy representation is compatible with these multitwists. This result is inspired by a construction of Thurston involving translation surfaces, which we will briefly recap here. For more details about Thurston's construction, please see Theorem 14.1 of \cite{FM}.  

Thurston's construction gives a way of constructing pseudo-Anosov maps. It starts with a surface $S$ and a pair of filling multicurves $\alpha$ and $\beta$ on $\Sigma$. That is, the complement of $\alpha \cup \beta$ is a collection of topological disks. The construction then produces a half-translation surface $(X,q)$ such that $T_\alpha$ and $T_\beta$, Dehn multitwists around $\alpha$ and $\beta$, are in the Veech group of $(X,q)$. Certain products of the $T_\alpha$ and $T_\beta$ maps then give pseudo-Anosov maps on $S$. 

We generalize Thurston's construction to dilation surfaces with the following theorem. 

\thurston

Before we prove this theorem, let us see it in action via a simple example where $\alpha$ and $\beta$ are single simple closed curves on a genus $2$ surface.  

The surface that we will start with is the following topological surface, with simple closed curves $\alpha$ and $\beta$ marked on the surface. We notice that $\alpha \cup \beta$ fills the surface as the complement of these two curves is two topological disks. Applying Thurston's construction, we can then find a translation surface with Dehn twists $T_\alpha$ and $T_\beta$ in its Veech group, as shown in Figure \ref{fig:thurston_octagon}.

\begin{figure}[ht]
	\begin{center}
		\begin{tikzpicture} [scale = 1.5]
			\draw[fill = {rgb,255:red,202; green,229; blue,255}] (0,0) -- (1,0) -- (1.71, 0.71) -- (1.71, 1.71) -- (1, 2.42) -- (0,2.42) -- (-0.71, 1.71) -- (-0.71, 0.71) -- (0,0); 
			\draw[thick, red, mid = {>}] (0,0) -- (1,0); 
			\draw[thick, teal, mid = {>>}] (1,0) -- (1.71,0.71); 
			\draw[thick, purple, mid = {|}] (1.71,0.71) -- (1.71,1.65);
			\draw[thick, purple, mid = {|}] (1.71,0.76) -- (1.71,1.71);
			\draw[thick, blue, mid = {|}] (1.71,1.71) -- (1,2.42); 
			\draw[thick, red, mid = {>}] (0,2.42) -- (1,2.42); 
			\draw[thick, teal, mid = {>>}] (-0.71,1.71) -- (0,2.42); 
			\draw[thick, purple, mid = {|}] (-0.71,0.71) -- (-0.71,1.66); 
			\draw[thick, purple, mid = {|}] (-0.71,0.76) -- (-0.71,1.71); 
			\draw[thick, blue, mid = {|}] (0,0) -- (-0.71,0.71); 
			
			\draw[dotted] (-0.355, 0.355) -- (1.71, 1.21); 
			\draw[dotted] (-0.71, 1.21) -- (1.355, 2.065); 
			
			\draw[dashed] (-0.355, 2.065) -- (0.5, 0); 
			\draw[dashed] (0.5, 2.42) -- (1.355, 0.355); 
			
			\node at (1.5, 0.2) {$\beta$}; 
			\node at (-0.45, 0.2) {$\alpha$}; 
			\draw[->] (1.9, 1.2) -- (3,1.2);
		\end{tikzpicture} 
		\begin{tikzpicture}[scale = 1.8]
			\draw[fill = {rgb,255:red,202; green,229; blue,255}] (0,0) -- (2,0) -- (2,2) -- (0,2) -- (0,0); 
			\draw[thick, red, mid = {>}] (0,0) -- (1,0); 
			\draw[thick, red, mid = {>}] (1,2) -- (2,2); 
			\draw[thick, teal, mid = {>>}] (1,0) -- (2,0); 
			\draw[thick, teal, mid = {>>}] (0,2) -- (1,2); 
			\draw[thick, blue, mid = {|}] (0,0) -- (0,1); 
			\draw[thick, blue, mid = {|}] (2,1) -- (2,2); 
			\draw[thick, purple, mid = {|}] (0,1) -- (0,1.95); 
			\draw[thick, purple, mid = {|}] (0,1.05) -- (0,2); 
			\draw[thick, purple, mid = {|}] (2,0) -- (2,0.95);
			\draw[thick, purple, mid = {|}] (2,0.05) -- (2,1);
			\draw[fill] (0,0) circle [radius = 0.025];
			\draw[fill] (1,0) circle [radius = 0.025];
			\draw[fill] (2,0) circle [radius = 0.025];
			\draw[fill] (2,1) circle [radius = 0.025];
			\draw[fill] (2,2) circle [radius = 0.025];
			\draw[fill] (1,2) circle [radius = 0.025];
			\draw[fill] (0,2) circle [radius = 0.025];
			\draw[fill] (0,1) circle [radius = 0.025];
			
			\draw[dotted] (0,0.5) -- (2,0.5); 
			\draw[dotted] (0,1.5) -- (2,1.5); 
			\draw[dashed] (0.5,0) -- (0.5,2); 
			\draw[dashed] (1.5,0) -- (1.5,2); 
			\node at (1.75, 0.62) {$\alpha$};
			\node at (0.4, 1.75) {$\beta$};
			
		\end{tikzpicture}
	\end{center}
	\caption{Thurston's construction applied to the topological surface on the left with the pair of filling curves $\alpha$ and $\beta$ results in the translation surface on the right with horizontal and vertical cylinders with core curves $\alpha$ and $\beta$.}
	\label{fig:thurston_octagon}
\end{figure}
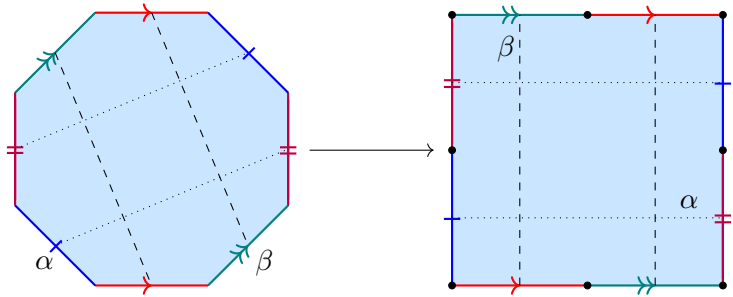

It is a genus two surface with one singularity of cone angle $2\pi$. Its Veech group contains the two parabolic elements $$\begin{bmatrix} 1 & 4 \\ 0 & 1 \end{bmatrix} \text{ and } \begin{bmatrix} 1 & 0 \\ -4 & 1 \end{bmatrix},$$ corresponding to Dehn twists around the dotted and dashed curves respectively, as seen in Figure \ref{fig:thurston_octagon}. Let $\alpha$ be the horizontal dotted curve and $\beta$ be the vertical dashed curve, and the two Dehn twists around these curves are then $T_\alpha$ and $T_\beta$. Any product of $T_\alpha$ and $T_\beta$ that has a trace with absolute value greater than $2$ would then be pseudo-Anosov. $T_\alpha (T_\beta)^{-1}$ is one such example. 

We wish now to extend this construction to dilation surfaces. In particular, for any holonomy $\rho : \pi_1(\Sigma) \rightarrow \mathbb{R}_+$ that is trivial on $\alpha$ and $\beta$, we wish to find a dilation surface $(X,\rho,q)$ with $T_\alpha$ and $T_\beta$ as standard Dehn twists on the surface. There is a $2$-dimensional space of such holonomies that should correspond to a $2$-dimensional space of dilation surfaces with $T_\alpha$ and $T_\beta$ in their affine automorphism group. 

To get this $2$-dimensional space of dilation surfaces, we imagine that the translation surface coming from Thurston's construction is made up of $4$ squares. We can then modify the lengths and widths of these squares to get a family of dilation surfaces with $T_\alpha$ and $T_\beta$ in their affine automorphism groups. We can fix the upper left square as a $1\times 1$ square and then modify the lengths $a$ and $b$ as shown in Figure \ref{fig:octagon_dilation} to be anything in $\mathbb{R}$. This gives us a $2$-dimensional family of dilation surfaces $(X, \rho, q)$ with $T_\alpha, T_\beta$ in their affine automorphism groups. One can then check that the associated maps $\rho$ vary over the $2$-dimensional space of holonomy representations that are trivial on the curves $\alpha$ and $\beta$. 

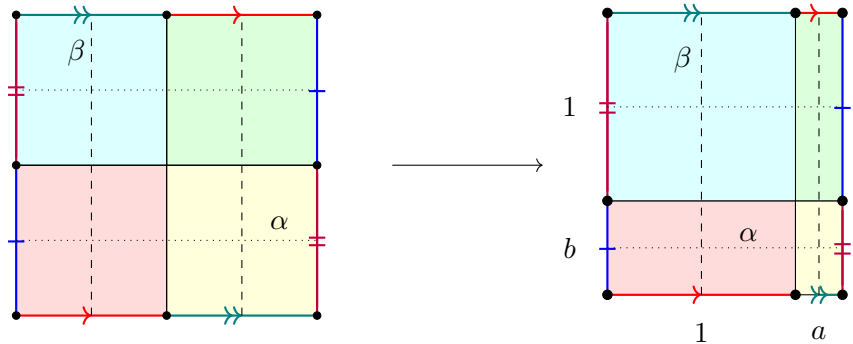
\begin{figure}[ht]
	\begin{center}
		\begin{tikzpicture}[scale = 2]
			\draw[fill = {rgb,255:red,255; green,220; blue,220}] (0,0) -- (1,0) -- (1,1) -- (0,1) -- (0,0); 
			\draw[fill = {rgb,255:red,255; green,255; blue,220}] (1,0) -- (2,0) -- (2,1) -- (1,1) -- (1,0); 
			\draw[fill = {rgb,255:red,220; green,255; blue,220}] (1,1) -- (2,1) -- (2,2) -- (1,2) -- (1,1); 
			\draw[fill = {rgb,255:red,220; green,255; blue,255}] (0,1) -- (1,1) -- (1,2) -- (0,2) -- (0,1);

			\draw[thick, red, mid = {>}] (0,0) -- (1,0); 
			\draw[thick, red, mid = {>}] (1,2) -- (2,2); 
			\draw[thick, teal, mid = {>>}] (1,0) -- (2,0); 
			\draw[thick, teal, mid = {>>}] (0,2) -- (1,2); 
			\draw[thick, blue, mid = {|}] (0,0) -- (0,1); 
			\draw[thick, blue, mid = {|}] (2,1) -- (2,2); 
			\draw[thick, purple, mid = {|}] (0,1) -- (0,1.95); 
			\draw[thick, purple, mid = {|}] (0,1.05) -- (0,2); 
			\draw[thick, purple, mid = {|}] (2,0) -- (2,0.95);
			\draw[thick, purple, mid = {|}] (2,0.05) -- (2,1);
			\draw[fill] (0,0) circle [radius = 0.025];
			\draw[fill] (1,0) circle [radius = 0.025];
			\draw[fill] (2,0) circle [radius = 0.025];
			\draw[fill] (2,1) circle [radius = 0.025];
			\draw[fill] (2,2) circle [radius = 0.025];
			\draw[fill] (1,2) circle [radius = 0.025];
			\draw[fill] (0,2) circle [radius = 0.025];
			\draw[fill] (0,1) circle [radius = 0.025];
			
			\draw[dotted] (0,0.5) -- (2,0.5); 
			\draw[dotted] (0,1.5) -- (2,1.5); 
			\draw[dashed] (0.5,0) -- (0.5,2); 
			\draw[dashed] (1.5,0) -- (1.5,2); 
			
			\draw (1,0) -- (1,2); 
			\draw (0,1) -- (2,1);
			\node at (1.75, 0.62) {$\alpha$};
			\node at (0.4, 1.75) {$\beta$};
			\draw[->] (2.5,1) -- (3.5,1); 
		\end{tikzpicture}
		\begin{tikzpicture}[scale = 2.5, baseline = -.4cm]
			\draw[fill = {rgb,255:red,255; green,220; blue,220}] (0,0) -- (1,0) -- (1,.5) -- (0,.5) -- (0,0); 
			\draw[fill = {rgb,255:red,255; green,255; blue,220}] (1,0) -- (1.25,0) -- (1.25,0.5) -- (1,0.5) -- (1,0); 
			\draw[fill = {rgb,255:red,220; green,255; blue,220}] (1,0.5) -- (1.25,0.5) -- (1.25,1.5) -- (1,1.5) -- (1,0.5); 
			\draw[fill = {rgb,255:red,220; green,255; blue,255}] (0,0.5) -- (1,0.5) -- (1,1.5) -- (0,1.5) -- (0,0.5);

			\draw[thick, red, mid = {>}] (0,0) -- (1,0); 
			\draw[thick, red, mid = {>}] (1,1.5) -- (1.25,1.5); 
			\draw[thick, teal, mid = {>>}] (1.1,0) -- (1.25,0); 
			\draw[thick, teal, mid = {>>}] (0,1.5) -- (1,1.5); 
			\draw[thick, blue, mid = {|}] (0,0) -- (0,.5); 
			\draw[thick, blue, mid = {|}] (1.25,0.5) -- (1.25,1.5); 
			\draw[thick, purple, mid = {|}] (0,0.5) -- (0,1.45); 
			\draw[thick, purple, mid = {|}] (0,0.55) -- (0,1.5); 
			\draw[thick, purple, mid = {|}] (1.25,0) -- (1.25,0.45);
			\draw[thick, purple, mid = {|}] (1.25,0.05) -- (1.25,0.5);
			
			\draw[fill] (0,0) circle [radius = 0.025];
			\draw[fill] (1,0) circle [radius = 0.025];
			\draw[fill] (1.25,0) circle [radius = 0.025];
			\draw[fill] (1.25,0.5) circle [radius = 0.025];
			\draw[fill] (1.25,1.5) circle [radius = 0.025];
			\draw[fill] (1,1.5) circle [radius = 0.025];
			\draw[fill] (0,1.5) circle [radius = 0.025];
			\draw[fill] (0,0.5) circle [radius = 0.025];
			
			\draw[dotted] (0,0.25) -- (1.25,0.25); 
			\draw[dotted] (0,1) -- (1.25,1); 
			\draw[dashed] (0.5,0) -- (0.5,1.5); 
			\draw[dashed] (1.125,0) -- (1.125,1.5); 
			
			\node at (0.75, 0.32) {$\alpha$};
			\node at (0.4, 1.25) {$\beta$};
			\node at (0.5, -0.2) {$1$}; 
			\node at (1.125, -0.2) {$a$}; 
			\node at (-0.2, 0.25) {$b$}; 
			\node at (-0.2, 1) {$1$}; 
		\end{tikzpicture} 
		\caption{Both surfaces have $T_\alpha$ and $T_\beta$ in their affine automorphism groups. The left surface has trivial holonomy $\rho$ whereas the right surface has a specific nontrivial holonomy $\rho$.}
		\label{fig:octagon_dilation}
	\end{center} 
\end{figure}

The basic idea of this example was to build our desired dilation surface out of rectangles so that the resulting surface had a horizontal and vertical cylinder corresponding to the $\alpha$ and $\beta$ curves and the appropriate holonomy representation $\rho$. We will expand on this idea in the next section when we discuss this construction in general, when $\alpha$ and $\beta$ may be multicurves.

\subsection{The construction}

In this section, we will prove Theorem \ref{thm:thurston} by way of more general propositions about building dilation surfaces out of pairs of filling multicurves. The proofs in this section will rely on a version of the Perron-Frobenius theorem, which we state here. A non-negative matrix $A$ is said to be \textbf{irreducible} if for every pair of indices $i$ and $j$, there exists a positive integer $k$ for which $A^k_{i,j} > 0$. 

\begin{thm}[Perron-Frobenius theorem] Let $A$ be an $n \times n$ non-negative irreducible matrix. Then, 
	\begin{itemize}
		\item[(i)] $A$ has an eigenvalue $\lambda_{pf} > 0$ of multiplicity $1$ with a positive eigenvector $\mathbf{v}_{pf}$, called the \textbf{Perron-Frobenius eigenvalue} and \textbf{Perron-Frobenius eigenvector} respectively. 
		\item[(ii)] any other eigenvalue $\lambda$ of $A$ satisfies $|\lambda| < \lambda_pf$. 
	\end{itemize}
\end{thm}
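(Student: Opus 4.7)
The plan is to follow the classical Brouwer fixed-point approach. First I would establish existence of a non-negative eigenvector by applying Brouwer's fixed-point theorem to the continuous self-map $T \colon \Delta \to \Delta$ of the standard simplex $\Delta = \{x \in \R^n : x_i \geq 0, \sum_i x_i = 1\}$ defined by $T(x) = Ax/\|Ax\|_1$. The map is well-defined because irreducibility precludes $Ax = 0$ for nonzero $x \geq 0$: if $Ax = 0$, then every column of $A$ indexed by the support of $x$ vanishes, which produces a non-trivial $A$-invariant block of indices and contradicts irreducibility. A fixed point $v$ of $T$ is then a non-negative eigenvector with eigenvalue $\lambda_{pf} := \|Av\|_1 > 0$.

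Next I would prove positivity and simplicity. The key tool is the fact that $(I+A)^{n-1}$ has strictly positive entries whenever $A$ is irreducible, since irreducibility is equivalent to strong connectivity of the support digraph of $A$, and in a strongly connected digraph on $n$ vertices any vertex is reachable from any other in at most $n-1$ steps. Applying this to the identity $(I+A)v = (1+\lambda_{pf})v$ gives $(1+\lambda_{pf})^{n-1}v = (I+A)^{n-1} v > 0$, so $v > 0$. For simplicity, if $w$ is any real eigenvector for $\lambda_{pf}$ (passing to real and imaginary parts separately if necessary), then choosing $c = \min_i w_i/v_i$ yields $w - cv$, a non-negative $\lambda_{pf}$-eigenvector with a zero entry; the positivity argument just given forces $w - cv = 0$.

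For the strict spectral bound in (ii), suppose $Aw = \mu w$ with $|\mu| = \lambda_{pf}$ and $w \neq 0$. The triangle inequality applied entrywise gives $A|w| \geq |\mu||w| = \lambda_{pf}|w|$. To upgrade this to equality I would deploy the left Perron eigenvector: running the existence step on $A^T$ (also non-negative and irreducible) produces $u > 0$ with $u^T A = \lambda_{pf} u^T$, and pairing gives $u^T(A|w| - \lambda_{pf}|w|) = 0$. Since $u > 0$ and $A|w| - \lambda_{pf}|w| \geq 0$, each coordinate of $A|w| - \lambda_{pf}|w|$ must vanish, so $A|w| = \lambda_{pf}|w|$. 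By simplicity $|w|$ is a positive multiple of $v$, hence $|w| > 0$. The equality case of the triangle inequalities then forces, in each row $i$, the nonzero summands $A_{ij}w_j$ to share a common complex argument; propagating this along the strongly connected support graph of $A$ shows $w$ is a scalar multiple of $v$, giving $\mu = \lambda_{pf}$.

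The main obstacle is exactly this last propagation step. When $A$ has a positive diagonal entry, or more generally is primitive, the common-argument condition forces $w$ to be a single global phase times $v$ without difficulty. For a general irreducible $A$, however, the conclusion $|\mu| < \lambda_{pf}$ as stated is not literally true: an imprimitive irreducible $A$ with index of imprimitivity $h$ genuinely has $h$ peripheral eigenvalues $\lambda_{pf}\omega$ with $\omega^h = 1$. So the cleanest honest statement is $|\lambda| \leq \lambda_{pf}$ with equality only on a cyclic subgroup of the circle of radius $\lambda_{pf}$; the strict inequality as written should be read under an implicit primitivity hypothesis, which will hold in the applications of this theorem later in the paper.
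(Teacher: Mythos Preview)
The paper does not prove this statement at all; it is quoted as a classical result and used as a black box in the proof of Proposition~5.2. So there is nothing to compare your argument against.

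Your proof of (i) is the standard Brouwer fixed-point argument and is correct. You are also right that (ii) as written is too strong: for an irreducible but imprimitive non-negative matrix with index of imprimitivity $h$, there are exactly $h$ eigenvalues on the spectral circle $|\lambda| = \lambda_{pf}$, so only $|\lambda| \leq \lambda_{pf}$ holds in general. The paper's statement of (ii) is indeed slightly inaccurate.

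One small correction to your final remark: you say primitivity ``will hold in the applications of this theorem later in the paper,'' but in fact it does not. In Proposition~5.2 the matrix $U$ has $U_{ij} > 0$ exactly when curve $c_i$ intersects curve $c_j$; since $\alpha$-curves only meet $\beta$-curves and vice versa, $U$ has a bipartite block structure and index of imprimitivity $2$. Fortunately this does not matter, because the paper's argument uses only (i): the existence of a positive Perron eigenvector with positive eigenvalue, which your proof establishes correctly.
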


We will begin now with a proposition showing that from the data of a pair of multicurves $\alpha$ and $\beta$, and a holonomy representation $\rho$ that is trivial on each curve, we can build families of quadratic differentials with holonomy $\rho$ and the curves of $\alpha$ and $\beta$ as the core curves of cylinders on the surface.  

We first notice that if $\rho : \pi_1(S) \rightarrow \R_+$ is trivial on a curve $c$ on the surface, then the flat complex line bundle $L_\rho$ restricts to a trivial bundle over the curve $c$. The vector space of real flat sections of $L_\rho|_{c}$ is then isomorphic to $\R$. With this, we then have the following proposition. 

\begin{prop} 
	\label{prop:qd}
	Let $\alpha = \{\alpha_1, \ldots, \alpha_k\}$ and $\beta = \{ \beta_1, \ldots, \beta_l\}$ be two multicurves on a surface $S$ such that $\mathcal{C} := \alpha \cup \beta$ fills $S$. If $\rho : \pi_1(S) \rightarrow \R_+$ is trivial on each curve $\alpha_i \in \alpha$ and $\beta_j \in \beta$, then the set of twisted quadratic differentials with holonomy $\rho$ and multicurves $\alpha$ and $\beta$ as the core curves of a complete horizontal and vertical cylinder decomposition is in direct correspondence with $(k+l)$-tuples of positive real sections of $L_\rho$ restricted to each $\alpha_i$ and $\beta_j$. 
\end{prop}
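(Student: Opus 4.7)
The plan is to generalize Thurston's classical rectangle construction for translation surfaces. Since $\alpha \cup \beta$ fills $S$, cutting along these multicurves produces a finite collection of topological disks, each a $2n$-gon whose boundary alternates between arcs of components of $\alpha$ and arcs of components of $\beta$. This has a natural dual cell decomposition: at each intersection point $p \in \alpha_i \cap \beta_j$ we assign a rectangle $R_p$ whose horizontal midline is the local arc of $\alpha_i$ through $p$ and whose vertical midline is the local arc of $\beta_j$. These rectangles tile $S$, and their boundary edges are identified in pairs along subarcs of $\alpha$ and $\beta$.

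I will endow this combinatorial decomposition with a geometric structure. Because $\rho$ is trivial on each $\alpha_i$ and $\beta_j$, the restrictions $L_\rho|_{\alpha_i}$ and $L_\rho|_{\beta_j}$ are canonically trivial real line bundles, so a positive real flat section amounts to a positive real number $w_i$ or $v_j$ once trivializations are fixed. I then carry out the construction on the universal cover $\widetilde{S}$: the sections lift to $\rho$-equivariant positive real functions $\widetilde{w}_i$, $\widetilde{v}_j$ on the preimages of $\alpha_i$ and $\beta_j$, so that $\widetilde{w}_i(\gamma \cdot \widetilde{p}) = \rho(\gamma)\, \widetilde{w}_i(\widetilde{p})$ for each deck transformation $\gamma \in \pi_1(S)$. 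At each lifted intersection point $\widetilde{p}$ above $p \in \alpha_i \cap \beta_j$, I place a Euclidean rectangle $R_{\widetilde{p}} \subset \C$ of width $\widetilde{v}_j(\widetilde{p})$ and height $\widetilde{w}_i(\widetilde{p})$.

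The rectangles in $\C$ are glued by pure translations along shared edges, where two rectangles share an edge when their lifted intersection points are consecutive along a common lifted component of $\widetilde{\alpha}_i$ or $\widetilde{\beta}_j$. The gluings are consistent because $\widetilde{w}_i$ is constant along each connected component of $\widetilde{\alpha}_i$ (the deck transformations realizing fundamental loops of $\alpha_i$ act trivially on $\C$ since $\rho$ is trivial there), and similarly for $\widetilde{v}_j$. Simple connectedness of $\widetilde{S}$ ensures the assembly is unobstructed. The deck group $\pi_1(S)$ then acts on the developed figure in $\C$ by real affine maps of the form $z \mapsto \rho(\gamma) z + b_\gamma$, by the $\rho$-equivariance of $\widetilde{w}, \widetilde{v}$. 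Quotienting yields a dilation surface $X$ of holonomy exactly $\rho$; the twisted quadratic differential $q$ is the descent of $dz^2$, with cone points of angle $n\pi$ arising at the centers of the $2n$-gon complementary disks. By construction, each $\alpha_i$ is the core curve of a horizontal Euclidean cylinder of width $w_i$ and each $\beta_j$ the core of a vertical Euclidean cylinder of width $v_j$, and these cylinders exhaust the surface.

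For the inverse correspondence: given any $(X, \rho, q)$ with a complete horizontal/vertical cylinder decomposition whose cores are precisely $\alpha$ and $\beta$, each Euclidean cylinder has a constant width (since $\rho$ is trivial on its core), and reading off these widths recovers positive real flat sections of $L_\rho|_{\alpha_i}$ and $L_\rho|_{\beta_j}$ inverse to the construction. The main subtlety I anticipate is verifying the consistency of gluings at the corners of the complementary $2n$-gons and confirming that the holonomy around each cone point is trivial; working on the universal cover sidesteps the global obstruction, as there is no monodromy to obstruct a planar assembly, and at each cone point the total angle is $2n \cdot (\pi/2) = n\pi$ with all local identifications being pure translations, so the cone-point holonomy vanishes automatically.
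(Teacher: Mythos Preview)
Your proof is correct and follows the same essential strategy as the paper's: Thurston's rectangle construction, with one rectangle per intersection point of $\alpha$ and $\beta$, glued according to the combinatorics of the curve system, and the inverse given by reading off cylinder widths. The difference is in how the twisting by $\rho$ is encoded. The paper works directly on $S$: at each intersection point $p$ it uses the flat connection on $L_\rho$ to produce an isomorphism $T_{p,\alpha\beta} : V_{(p,\alpha)} \to V_{(p,\beta)}$ between the spaces of flat sections over the two intersecting curves, so that the rectangle at $p$ has width $v_{(p,\beta)}$ and height $T_{p,\alpha\beta}(v_{(p,\alpha)})$, and adjacent rectangles are glued by \emph{dilations} reflecting parallel transport along arcs of the curves. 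You instead pass to the universal cover, build a genuine translation-surface structure there from the $\rho$-equivariant lifts of the sections (gluing by pure translations), and recover the dilation structure by quotienting by the $\rho$-twisted deck action. These are equivalent packagings of the same construction: your approach makes the global consistency of the gluings transparent, since simple connectedness of $\widetilde{S}$ eliminates monodromy obstructions, while the paper's is more intrinsic and dovetails with the Perron--Frobenius argument in the subsequent proposition, where one wants a linear operator on $\prod_i V_i$ built from the transition maps $T_{p,\alpha\beta}$.
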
 

\begin{proof} Let $L_\rho$ be the flat complex line bundled associated to the holonomy representation $\rho$. Then, since the holonomy around any curve in $\mathcal{C}$ is trivial, the restriction of $L_\rho$ to any curve $c_i \in \mathcal{C}$ is trivial. Associated to each $c_i$, we can define $V_i \cong \R$ to be the vector space of real flat sections of $L_\rho|_{c_i}$. Now suppose that we have a positive vector $(v_1, \ldots, v_{k+l}) \in (\R_+)^{k+l}$, where each $v_i \in V_i$ is a real flat section. We're going to create a dilation surface that has a Euclidean cylinder for each $c_i$. The $v_i$'s will determine the ``widths" of the cylinders. We note here that because of the holonomy $\rho$, the widths of cylinders are not defined globally. 
	
	 Let $v_{(\beta,p_j)}$ be the $v_i$ that corresponds to the curve in $\beta$ that goes through point $p_j$. We define $v_{(\alpha, p_j)}$ analogously for $\alpha$ curves. For each point $p_i$ that is an intersection point of an $\alpha$ curve and a $\beta$ curve, the line bundle $L_\rho$ gives us isomorphisms $T_{p_i, \alpha\beta} : V_{(p_i, \alpha)} \rightarrow V_{(p_i,\beta)}$ and $T_{p_i, \beta\alpha} : V_{(p_i, \beta)} \rightarrow V_{(p_i,\alpha)}$ between the vector spaces associated to the $\alpha$ and $\beta$ curves intersecting at the point $p_i$. Then locally, the ratio of the widths of the $\beta$ and $\alpha$ cylinders intersecting at a point $p_i$ is given by the ratio of $v_{(p_i, \beta)}$ to $T_{p_i, \alpha \beta} (v_{(p_i, \alpha)})$.

	Now, to build a quadratic differential with holonomy $\rho$, we will construct a rectangle for each intersection point $p_i$. The width of the rectangle will be $v_{(p_i, \beta)}$ and the height will be $T_{p_i, \alpha \beta} (v_{(p_i, \alpha)})$. These rectangles can then be glued together by dilation with gluing pattern specified by the intersection pattern of the $\alpha$ and $\beta$ curves. Since there was no holonomy around any curve $c_i$, the rectangles along the curve $c_i$ will always glue together to form a Euclidean cylinder. 
	
	Conversely, if we had started with a twisted quadratic differential with holonomy $\rho$ that completely decomposed into horizontal and vertical cylinders with core curves the multicurves $\alpha$ and $\beta$, we can find the widths of these cylinders as flat sections of the line bundles $L_\rho$ restricted to the curves $\alpha_i$ and $\beta_j$. 
\end{proof}

The above proposition can be thought of as a special case of a generalization of the well-known fact that two transverse (untwisted) measured laminations can be combined in a unique way to form a unique quadratic differential. Here, we are taking two transverse twisted measured laminations coming from multicurves and constructing a unique twisted quadratic differential with these twisted measured laminations as its horizontal and vertical laminations. 

We can actually do a bit more with this construction and specify up to scale the moduli of the cylinders corresponding to the $\alpha$ and $\beta$ curves. 

\begin{prop} 
	\label{prop:moduli}
	Let $\alpha = \{\alpha_1, \ldots, \alpha_k\}$ and $\beta = \{ \beta_1, \ldots, \beta_l\}$ be two multicurves on a surface $S$ such that $\mathcal{C} := \alpha \cup \beta$ fills $S$. If $\rho : \pi_1(S) \rightarrow \R_+$ is trivial on each $c_i \in \mathcal{C}$, then for any list of positive real numbers $m_1, \ldots, m_{k+l}$, there exists a $\lambda > 0$ and a twisted quadratic differential with holonomy $\rho$ that splits completely into horizontal and vertical cylinders with core curves $\alpha_i$ and $\beta_j$ respectively, and moduli $\lambda m_i$ for each cylinder $c_i$. 
\end{prop}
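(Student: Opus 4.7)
The plan is to use Proposition \ref{prop:qd} to reduce the problem to a linear-algebraic question about widths $(v_1,\ldots,v_{k+l})$, then invoke the Perron-Frobenius theorem. First, fix trivializations of $L_\rho|_{c_i}$ for each $c_i\in\mathcal{C}$ so that each flat section $v_i$ becomes a positive real number, and apply Proposition \ref{prop:qd} to build a twisted quadratic differential by gluing one Euclidean rectangle per intersection point $p\in\alpha_i\cap\beta_j$. Because $\rho$ is trivial on each $c_i$, the horizontal cylinder around $\alpha_i$ and the vertical cylinder around $\beta_j$ is honestly Euclidean, with a well-defined circumference and height. Tracking the bundle isomorphisms $T_{p,\alpha\beta}$ and $T_{p,\beta\alpha}$ appearing in the proof of Proposition \ref{prop:qd} yields non-negative constants $a_{ij},b_{ji}\geq 0$ (strictly positive exactly when $\alpha_i\cap\beta_j\neq\emptyset$) such that $\text{mod}(\alpha_i)=v_{\alpha_i}/\sum_j a_{ij}v_{\beta_j}$ and $\text{mod}(\beta_j)=v_{\beta_j}/\sum_i b_{ji}v_{\alpha_i}$.

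Next, imposing the conditions $\text{mod}(\alpha_i)=\lambda m_{\alpha_i}$ and $\text{mod}(\beta_j)=\lambda m_{\beta_j}$ for an unknown scale $\lambda>0$ rewrites the problem as the coupled linear system $v_\alpha=\lambda A v_\beta$, $v_\beta=\lambda B v_\alpha$, with $A_{ij}=m_{\alpha_i}a_{ij}$ and $B_{ji}=m_{\beta_j}b_{ji}$. Assembling these into the $(k+l)\times(k+l)$ block matrix $N=\bigl(\begin{smallmatrix}0 & A\\ B & 0\end{smallmatrix}\bigr)$, the task becomes finding a positive eigenvector of $N$ with positive eigenvalue $\mu$, and then setting $\lambda=1/\mu$. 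Non-negativity of $N$ is immediate, so by Perron-Frobenius such a pair exists as soon as $N$ is irreducible.

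The main obstacle is proving this irreducibility. Strong connectedness of the index graph of $N$ is equivalent to connectedness of the bipartite intersection graph of $\alpha$ against $\beta$ (vertices the curves of $\mathcal{C}$, edges the intersection points). I would deduce this from the filling hypothesis by contradiction: a disconnection $\mathcal{C}=\mathcal{C}_1\sqcup\mathcal{C}_2$ would realize $\mathcal{C}_1$ and $\mathcal{C}_2$ as disjoint multicurves; then $S\setminus\mathcal{C}_1$ has a component $U$ containing all of $\mathcal{C}_2$ whose interior must be cut into disks by $\mathcal{C}_2$ alone, while every component of $S\setminus\mathcal{C}_1$ meeting no curve of $\mathcal{C}_2$ must already be a disk. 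A short Euler-characteristic argument on $U$ then forces the boundary components of $U$ coming from $\mathcal{C}_1$ to be peripheral in $U$, which makes the corresponding curves of $\mathcal{C}_1$ inessential in $S$ and contradicts that $\mathcal{C}_1$ is part of an essential multicurve.

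Granting irreducibility, Perron-Frobenius yields a positive vector $v=(v_\alpha,v_\beta)$ and a positive eigenvalue $\mu>0$ with $Nv=\mu\, v$. Setting $\lambda=1/\mu$ makes $v$ solve the coupled system, and feeding $v$ back into Proposition \ref{prop:qd} produces a twisted quadratic differential $q$ with holonomy $\rho$ whose horizontal and vertical cylinders have core curves exactly $\alpha_i$ and $\beta_j$ and moduli $\lambda m_{\alpha_i}$ and $\lambda m_{\beta_j}$, as required.
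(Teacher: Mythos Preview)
Your proposal is correct and follows essentially the same route as the paper: reduce via Proposition~\ref{prop:qd} to finding a positive eigenvector of a nonnegative $(k+l)\times(k+l)$ operator encoding the moduli constraints, and apply Perron--Frobenius. Your block matrix $N=\bigl(\begin{smallmatrix}0 & A\\ B & 0\end{smallmatrix}\bigr)$ is exactly the paper's operator $U$ (which is automatically block anti-diagonal since $\alpha$-curves only meet $\beta$-curves), and both arguments set $\lambda=1/\lambda_{pf}$.

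The only difference is in the treatment of irreducibility. The paper simply asserts that filling implies $U$ is irreducible; you supply an argument via connectedness of the bipartite intersection graph. Your argument works but is more elaborate than necessary, and the claim that a single component $U$ of $S\setminus\mathcal{C}_1$ contains \emph{all} of $\mathcal{C}_2$ needs the extra hypothesis that $\mathcal{C}_2$ is chosen to be a single graph component (so that its curves form a connected subset of $S$). A shorter route: since $\mathcal{C}$ fills, the complement is a union of disks, so $S$ deformation retracts onto the $1$-complex $\bigcup_i c_i$; connectedness of $S$ then forces this $1$-complex to be connected, which is exactly connectedness of the intersection graph and hence irreducibility of $N$.
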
 

\begin{proof}
	As in the proof of Proposition \ref{prop:qd}, we can define vector spaces $V_i$ of real flat sections of the line bundles $L_\rho|_{c_i}$ where $c_i \in \mathcal{C} := \alpha \cup \beta$. Then, as before, choosing $v_i \in V_i$ gives us dimensions for rectangles corresponding to each point $p_i$ in the intersection of the $\alpha$ and $\beta$ curves. Then, for each curve $c_i \in \alpha$, the modulus $m_i$ of the corresponding cylinder satisfies 
	$$\frac{1}{m_i} = \frac{\text{circumference}}{\text{height}} = \frac{1}{v_i} \sum_{p_j \in c_i \cap \beta} T_{p_j, \beta\alpha} (v_{(\beta, p_j)}),$$ where the sum is taken over all of the intersection points of the curve $c_i$ with curves in $\beta$. $1/m_i$ for $c_i$ a curve in $\beta$ is then defined similarly, but with the roles of $\beta$ and $\alpha$ reversed. 
	
	Rewriting the above equation, we get that 
	$$m_i\sum_{p_j \in c_i \cap\beta} T_{p_j, \beta\alpha} (v_{(\beta, p_j)}) = v_i = v_{(\alpha, p_j)}.$$ 
	
	We can define a linear operator $U : \prod_{i=1}^{k+l} V_i \rightarrow \prod_{i=1}^{k+l} V_i$ by letting 
	$$U_i = \begin{cases} m_i\sum_{p_j \in c_i \cap \beta} T_{p_j, \beta\alpha} (v_{(\beta, p_j)}) , & c_i \in \alpha \\ m_i\sum_{p_j \in c_i \cap \alpha} T_{p_j, \alpha\beta} (v_{(\alpha, p_j)}) , & c_i \in \beta. \end{cases}$$ 
	
	Thinking of $U$ as a matrix, $U$ is then non-negative, and $U_{i,j}$ is positive if and only if the curve $c_i$ intersects the curve $c_j$. Since the curves $\mathcal{C}$ fill the surface, the matrix $U$ is irreducible. Therefore, the Perron-Frobenius theorem applies and $U$ has a positive eigenvalue $\lambda_{pf} > 1$ and a corresponding positive eigenvector $\textbf{v}_{pf}$. This $\textbf{v}_{pf}$ is then a vector of widths that give us cylinders of moduli $m_i / \lambda_{pf}$. So we can construct a twisted quadratic differential with holonomy $\rho$ and moduli $\lambda m_i$ where $\lambda = 1/\lambda_{pf}$. 
\end{proof}

Let $\alpha$ and $\beta$ be multicurves that fill a surface $S$. One way to create a dilation surface with the pseudo-Anosov map $\psi$ represented in its affine automorphism group is to create a surface where the Dehn multitwists $T_\alpha$ and $T_\beta$ are still in the affine automorphism group. A necessary condition is that the holonomy representation of the dilation surface is compatible with $\psi$. If $\Sigma$ is the underlying topological surface to $(X,\rho,q)$, then let $\rho : \pi_1(\Sigma) \rightarrow \R_+$ be a holonomy representation that sends the homotopy classes of the curves $\alpha$ and $\beta$ both to $1$, so that there is trivial holonomy around these two curves. This holonomy condition will guarantee that our Dehn multitwists will be standard Dehn multitwists.

Now we can prove the main theorem of this section, Theorem \ref{thm:thurston}

\begin{proof}[Proof of Theorem \ref{thm:thurston}] This theorem is really a corollary of Proposition \ref{prop:moduli}, and follows by specifying that the moduli of the cylinders corresponding to $\alpha$ and $\beta$ are all the same. We can let the $m_i$'s in the previous proposition be all equal to $1$. Then, the constructed quadratic differentials has Dehn twists $T_\alpha$ and $T_\beta$ in its affine automorphism group, with derivatives given by $$T_\alpha \mapsto \begin{bmatrix} 1 & \mu \\ 0 & 1 \end{bmatrix} \text{ and } T_\beta \mapsto \begin{bmatrix} 1 & 0 \\ -\mu & 1 \end{bmatrix},$$ where $\mu = 1/\lambda_{pf}$.  
\end{proof} 




One might wonder if this modified Thurston construction could produce dilation surfaces with nontrivial holonomy that have a lattice Veech group. After all, there exist translation surfaces produced from the standard Thurston construction such that $T_\alpha$ and $T_\beta$ generate a lattice in the Veech group. Could we take one of these examples and perturb it slightly via this modified Thurston construction to get a dilation surface such that $T_\alpha$ and $T_\beta$ generate a lattice subgroup of $\text{SL}(X, \rho, q)$? 

The answer to this question is unfortunately no. In \cite{L}, Leininger gives a complete list of curve systems $\alpha$ and $\beta$ such that the Thurston construction produces a translation surface where $T_\alpha$ and $T_\beta$ generate a lattice in $\text{SL}(X, q)$. It is then not hard to check that in each of these examples, the curves in $\alpha$ and $\beta$ generate all of the homology of the surface and thus the modified Thurston construction does not produce any dilation surfaces with nontrivial holonomy from these systems. 

While it is still possible that dilation surfaces with nontrivial holonomy and lattice Veech groups exist, and even that the modified Thurston construction could produce such examples (if the Veech group includes more than just the $T_\alpha$ and $T_\beta$ maps), more work would be needed to find these examples.

\subsection{Examples}

Our first example of the construction described in Theorem \ref{thm:thurston} was given in an earlier section and illustrated in Figure \ref{fig:octagon_dilation}. We note here that for certain pseudo-Anosov maps $\psi$ like $\psi = T_\alpha (T_\beta)^{-1}$ in that example, the process described in Theorem \ref{thm:thurston} actually generates a family of dilation surfaces $(X, \rho, q)$ with $\psi$ represented in the affine automorphism group for the whole set of holonomies $\rho$ compatible with $\psi$. This is made precise in the following proposition. 

\begin{prop} If $\alpha$ and $\beta$ are simple closed curves that together fill the surface and have algebraic intersection number $i(\alpha, \beta) \neq 0$, then the set of holonomies compatible with $\psi := T_\alpha (T_\beta)^{-1}$ is exactly the set of holonomies that are trivial on both $\alpha$ and $\beta$.  
\end{prop}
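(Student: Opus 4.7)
The plan is to reduce the condition $\psi^*\rho = \rho$ to a linear algebra question on $H_1(\Sigma;\R)$. Since $\R_+$ is abelian, $\rho$ factors through $H_1(\Sigma;\Z)$, and passing to logarithms we can identify $\phi := \log \rho$ with an element of $H^1(\Sigma;\R)$. The compatibility condition is then equivalent to $\phi \circ \psi_* = \phi$ as maps $H_1(\Sigma;\R) \to \R$. The forward direction is immediate from the Picard-Lefschetz formula $T_{\gamma,*}(x) = x + \hat{i}(x,\gamma)[\gamma]$: if $\phi$ vanishes on both $[\alpha]$ and $[\beta]$, then each of $T_{\alpha,*}$ and $T_{\beta,*}$ preserves $\phi$, and hence so does $\psi_* = T_{\alpha,*} T_{\beta,*}^{-1}$.

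For the converse, I would compute $\psi_*(x) - x$ directly by composing Picard-Lefschetz for $T_\beta^{-1}$ followed by $T_\alpha$. The result lies in the span of $[\alpha]$ and $[\beta]$, with coefficients linear in $\hat{i}(x,\alpha)$ and $\hat{i}(x,\beta)$ and involving the constant $\hat{i}(\alpha,\beta)$. Applying $\phi$ and demanding that $\phi\circ\psi_* - \phi$ vanish identically produces, for every $x \in H_1(\Sigma;\R)$, the scalar identity
\[
\phi(\alpha)\,\hat{i}(x,\alpha) \;-\; \bigl(\phi(\beta) + \hat{i}(\beta,\alpha)\,\phi(\alpha)\bigr)\,\hat{i}(x,\beta) \;=\; 0.
\]
Since the algebraic intersection pairing on $H_1(\Sigma;\R)$ is non-degenerate, this forces the homology identity
\[
\phi(\alpha)\,[\alpha] \;=\; \bigl(\phi(\beta) + \hat{i}(\beta,\alpha)\,\phi(\alpha)\bigr)\,[\beta] \quad\text{in } H_1(\Sigma;\R).
\]

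The final step is to invoke the hypothesis $\hat{i}(\alpha,\beta)\neq 0$ to conclude that $[\alpha]$ and $[\beta]$ are linearly independent in $H_1(\Sigma;\R)$: if $[\alpha] = c[\beta]$, then $\hat{i}(\alpha,\beta) = c\,\hat{i}(\beta,\beta) = 0$, contradicting the hypothesis. Linear independence forces the $[\alpha]$-coefficient to vanish, giving $\phi(\alpha) = 0$, and substituting this into the $[\beta]$-coefficient yields $\phi(\beta) = 0$. Thus $\rho(\alpha) = \rho(\beta) = 1$, as required. The only delicate point is tracking the signs when inverting Picard-Lefschetz for $T_\beta^{-1}$ and composing with $T_\alpha$; once that bookkeeping is fixed, the argument is just non-degeneracy of the intersection pairing together with the observation that two simple closed curves with nonzero algebraic intersection must be homologically independent.
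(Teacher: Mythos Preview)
Your proof is correct and uses the same underlying ingredients as the paper: the Picard--Lefschetz formula for the action of Dehn twists on homology together with the hypothesis $\hat{i}(\alpha,\beta)\neq 0$. The only difference is that the paper simply evaluates $\psi_*$ at $x=[\alpha]$ and $x=[\beta]$ and reads off $\rho([\beta])=1$ and then $\rho([\alpha])=1$ directly, whereas you argue for general $x$ and then invoke non-degeneracy of the intersection pairing and linear independence of $[\alpha],[\beta]$; the paper's route avoids the linear-independence step entirely.
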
 
\begin{proof} Let $\rho : \pi_(X) \rightarrow \R$ be a holonomy representation. Then, since $$\psi_* [\alpha] = [\alpha] - i(\alpha, \beta)[\beta],$$ for $\rho$ to be compatible with $\psi$, we want that $\rho([\alpha] - i(\alpha, \beta)[\beta]) = \rho(\psi_*[\alpha]) = \rho([\alpha])$. Thus, we must have that $i(\alpha, \beta) \rho([\beta)] = 0$ and so  $\rho([\beta]) = 0$ since $i(\alpha, \beta) \neq 0$. We also have that 
	$$\psi_* [\beta] = [\beta] + i(\beta, \alpha) [\alpha] - i(\beta, \alpha) i (\alpha,\beta) [\beta].$$ 
	Since $\rho([\beta]) = 0$, by similar reasoning as before, we have that $\rho([\alpha])$ must also be zero. 
\end{proof} 

In other words, for $\alpha$ and $\beta$ satisfying the conditions of the proposition above, our modified Thurston construction gives another way of constructing the whole family of dilation surfaces $(X, \rho, q)$ with $\psi = T_\alpha (T_\beta)^{-1}$ in $\Aff(X, \rho, q)$, where this family ranges over all $\rho$ that are compatible with $\psi$. 

We can also construct examples where we begin with $\alpha$ and $\beta$ that are not just simple closed curves. Let us consider the example shown in Figure \ref{fig:L_example}. This is a genus $2$ example where the simple closed curve $\alpha$ and the multicurve $\beta = \{\beta_1, \beta_2\}$ fill the surface. The homology of the surface is generated by the four curves $\gamma_1, \ldots, \gamma_4$ as shown below. 

\begin{figure}[ht]
\begin{center}
	\begin{tikzpicture}[scale = 1.5]
		\draw[fill = {rgb,255:red,202; green,229; blue,255}] (0,0) -- (2,0) -- (2,1) -- (1,1) -- (1,2) -- (0,2) -- (0,0); 
		\draw[thick, red, mid={|}] (0,0) -- (1,0); 
		\draw[thick, blue, mid={|}] (1,0) -- (1.95,0); 
		\draw[thick, blue, mid={|}] (1.05,0) -- (2,0); 
		\draw[thick, purple, mid ={>}] (2,0) -- (2,1); 
		\draw[thick,blue, mid = {|}] (1.95,1) -- (1,1); 
		\draw[thick,blue, mid = {|}] (2,1) -- (1.05,1); 
		\draw[thick,orange, mid={>>}] (1,1) -- (1,2);
		\draw[thick,red,mid={|}] (1,2) -- (0,2);
		\draw[thick,purple,mid={>}] (0,1) -- (0,2);
		\draw[thick,orange, mid={>>}] (0,0) -- (0,1);  
		\draw[fill] (0,0) circle [radius = 0.05];
		\draw[fill] (1,0) circle [radius = 0.05];
		\draw[fill] (2,0) circle [radius = 0.05];
		\draw[fill] (0,1) circle [radius = 0.05];
		\draw[fill] (0,2) circle [radius = 0.05];
		\draw[fill] (1,1) circle [radius = 0.05];
		\draw[fill] (2,1) circle [radius = 0.05];
		\draw[fill] (1,2) circle [radius = 0.05];
		\draw[dotted] (0,0.5) -- (2,0.5);
		\draw[dotted] (0,1.5) -- (1,1.5); 
		\draw[dashed] (0.5,0) -- (0.5, 2);
		\draw[dashed] (1.5,0) -- (1.5, 1); 
		\node at (0.5,-0.35) {$\beta_1$};
		\node at (1.5,-0.35) {$\beta_2$};
		\node at (-0.25, 0.5) {$\alpha$}; 
	\end{tikzpicture} 
\hspace{1cm}
	\begin{tikzpicture}[scale = 1.5]
	\draw[fill = {rgb,255:red,202; green,229; blue,255}] (0,0) -- (2,0) -- (2,1) -- (1,1) -- (1,2) -- (0,2) -- (0,0); 
	\draw[thick, red, mid={|}] (0,0) -- (1,0); 
	\draw[thick, blue, mid={|}] (1,0) -- (1.95,0); 
	\draw[thick, blue, mid={|}] (1.05,0) -- (2,0); 
	\draw[thick, purple, mid ={>}] (2,0) -- (2,1); 
	\draw[thick,blue, mid = {|}] (1.95,1) -- (1,1); 
	\draw[thick,blue, mid = {|}] (2,1) -- (1.05,1); 
	\draw[thick,orange, mid={>>}] (1,1) -- (1,2);
	\draw[thick,red,mid={|}] (1,2) -- (0,2);
	\draw[thick,purple,mid={>}] (0,1) -- (0,2);
	\draw[thick,orange, mid={>>}] (0,0) -- (0,1);  
	\draw[fill] (0,0) circle [radius = 0.05];
	\draw[fill] (1,0) circle [radius = 0.05];
	\draw[fill] (2,0) circle [radius = 0.05];
	\draw[fill] (0,1) circle [radius = 0.05];
	\draw[fill] (0,2) circle [radius = 0.05];
	\draw[fill] (1,1) circle [radius = 0.05];
	\draw[fill] (2,1) circle [radius = 0.05];
	\draw[fill] (1,2) circle [radius = 0.05];
	
	\draw[dotted, m3 = {>}, red] (0.5,0) -- (0.5, 2);
	\draw[dotted, m1 = {>}, blue] (1.5,0) -- (1.5, 1); 
	\draw[dotted, mid = {>}, purple] (0,1.25) -- (2,0.25); 
	\draw[dotted, m1 = {>}, orange] (0,0.25) -- (1,1.25); 
	\node[red] at (0.5,-0.35) {$\gamma_1$};
	\node[blue] at (1.5,-0.35) {$\gamma_2$};
	\node[orange] at (-0.25, 0.25) {$\gamma_3$}; 
	\node[purple] at (-0.25, 1.25) {$\gamma_4$}; 
\end{tikzpicture} 
\end{center}
\caption{A genus $2$ surface where the curve $\alpha$ and multicurve $\beta = \{\beta_1, \beta_2\}$ fill the surface. The homology of the surface is generated by the four curves $\gamma_1, \ldots, \gamma_4$ as shown on the right.}
\label{fig:L_example}
\end{figure}
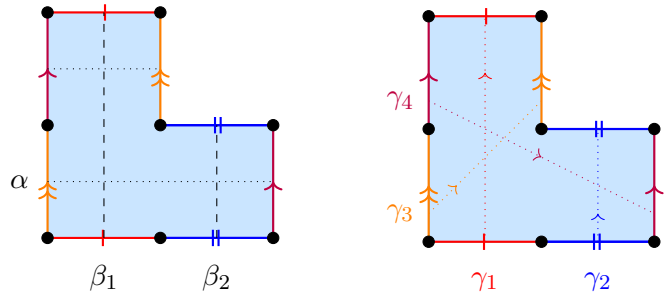

Then, the set of holonomy representations $\rho : \pi_1(X) \rightarrow \R_+$ that are trivial on each of $\alpha, \beta_1, \beta_2$ is a one dimensional vector space consisting of those $\rho : (\gamma_1, \gamma_2, \gamma_3, \gamma_4) \mapsto (1,1, a, 1/a)$ for some $a \in \R$. We expect a unique (up to scale) dilation surface with Dehn multitwists $T_\alpha$ and $T_\beta$ around cylinders of the same modulus in its affine automorphism group for each of these holonomy representations. 

After normalizing one length to $1$, we may assume that our surface has widths for its horizontal and vertical cylinders as labeled in Figure \ref{fig:L_example_lengths}.

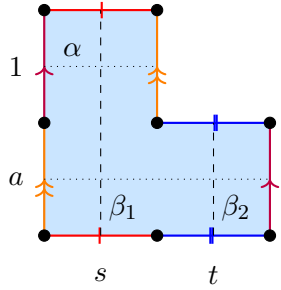
\begin{figure}[ht]
\begin{center}
	\begin{tikzpicture}[scale = 1.5]
		\draw[fill = {rgb,255:red,202; green,229; blue,255}] (0,0) -- (2,0) -- (2,1) -- (1,1) -- (1,2) -- (0,2) -- (0,0); 
		\draw[thick, red, mid={|}] (0,0) -- (1,0); 
		\draw[thick, blue, mid={||}] (1,0) -- (2,0); 
		\draw[thick, purple, mid ={>}] (2,0) -- (2,1); 
		\draw[thick,blue, mid = {||}] (2,1) -- (1,1); 
		\draw[thick,orange, mid={>>}] (1,1) -- (1,2);
		\draw[thick,red,mid={|}] (1,2) -- (0,2);
		\draw[thick,purple,mid={>}] (0,1) -- (0,2);
		\draw[thick,orange, mid={>>}] (0,0) -- (0,1);  
		\draw[fill] (0,0) circle [radius = 0.05];
		\draw[fill] (1,0) circle [radius = 0.05];
		\draw[fill] (2,0) circle [radius = 0.05];
		\draw[fill] (0,1) circle [radius = 0.05];
		\draw[fill] (0,2) circle [radius = 0.05];
		\draw[fill] (1,1) circle [radius = 0.05];
		\draw[fill] (2,1) circle [radius = 0.05];
		\draw[fill] (1,2) circle [radius = 0.05];
		\draw[dotted] (0,0.5) -- (2,0.5);
		\draw[dotted] (0,1.5) -- (1,1.5); 
		\draw[dashed] (0.5,0) -- (0.5, 2);
		\draw[dashed] (1.5,0) -- (1.5, 1); 
		\node at (0.5,-0.35) {$s$};
		\node at (1.5,-0.35) {$t$};
		\node at (-0.25, 0.5) {$a$}; 
		\node at (-0.25, 1.5) {$1$}; 
		\node at (0.7, 0.25) {$\beta_1$}; 
		\node at (1.7, 0.25) {$\beta_2$};
		\node at (0.25, 1.65) {$\alpha$};  
	\end{tikzpicture} 
\end{center}
\caption{Our example surface with some lengths labeled.}
\label{fig:L_example_lengths}
\end{figure}

Then, the moduli of the $\beta_1$ and $\beta_2$ curves are $s/(1+a)$ and $t/a$. For these moduli to be equal, we must have that $t = \left(\frac{a}{1+a}\right) s$. The modulus of the $\alpha$ cylinder is then $\left(s + \frac{s}{a} + \frac{s}{1+a}\right)^{-1}$. For this modulus to be equal to the moduli of $\beta_1$ and $\beta_2$, we then need that $s = \frac{\sqrt{a}(1+a)}{\sqrt{a^2 + 3a+1}}$. Thus, there exists a unique dilation surface with holonomy $\rho$, up to scaling by $\R_+$, with $T_\alpha$ and $T_\beta$ in the affine automorphism group as Dehn multitwists around cylinders of the same modulus.

\section{Further Questions}

There are still many natural questions related to the realization problem that are still unanswered. We end with some of these questions: 
\begin{enumerate}
	\item If $\psi \in \text{Mod}(\Sigma)$ satisfies that $\psi^n$ is a Dehn multitwist for some $n$, and $\rho : \pi_1(\Sigma) \rightarrow \mathbb{R}_+$ is a compatible holonomy representation, can $\psi$ necessarily be realized as an affine automorphism of some dilation surface $(X, \rho, q)$? 
	\item Given a mapping class group $\psi$ and compatible holonomy $\rho$ for which there exists a dilation surface $(X, \rho, q)$ where $\psi$ is represented by an affine automorphism, what can we say about the uniqueness of the surface $(X, \rho, q)$? 
	\item Do any of the dilation surfaces coming from pairs of Dehn multitwist construction in Theorem \ref{thm:thurston} had a lattice Veech group? While the pairs of Dehn multitwists are never enough to generate a lattice subgroup of $\text{SL}(X, \mathbb{R})$, perhaps there are enough other elements in their Veech groups so that the whole Veech group is a lattice. 
	\item More generally, are there dilation surfaces with nontrivial holonomy that have a lattice Veech group? 
	\item Are there dilation surfaces with Veech groups that are large in other ways? For example, are there dilation surfaces with infinitely generated Veech groups? 
\end{enumerate}

\bibliographystyle{alpha}
\bibliography{references}

\begin{thebibliography}{MMY10}

\bibitem[BFG20]{BFG}
Adrien Boulanger, Charles Fougeron, and Selim Ghazouani.
\newblock Cascades in the dynamics of affine interval exchange transformations.
\newblock {\em Ergodic Theory Dynam. Systems}, 40(8):2073--2097, 2020.

\bibitem[BG19]{BG}
Adrien Boulanger and Selim Ghazouani.
\newblock $\mathrm{SL}_2(\mathbb{R})$-dynamics on the moduli space of one-holed
  tori, 2019.

\bibitem[BS18]{BS}
Joshua Bowman and Slade Sanderson.
\newblock Angels' staircases, sturmian sequences, and trajectories on homothety
  surfaces, 2018.

\bibitem[DFG19]{DFG}
Eduard Duryev, Charles Fougeron, and Selim Ghazouani.
\newblock Dilation surfaces and their {V}eech groups.
\newblock {\em J. Mod. Dyn.}, 14:121--151, 2019.

\bibitem[FM12]{FM}
Benson Farb and Dan Margalit.
\newblock {\em A primer on mapping class groups}, volume~49 of {\em Princeton
  Mathematical Series}.
\newblock Princeton University Press, Princeton, NJ, 2012.

\bibitem[Gha18]{G}
Selim Ghazouani.
\newblock Teichmüller dynamics, dilation tori and piecewise affine circle
  homeomorphisms, 2018.

\bibitem[HO92]{HO}
A.~Hatcher and U.~Oertel.
\newblock Affine lamination spaces for surfaces.
\newblock {\em Pacific J. Math.}, 154(1):87--101, 1992.

\bibitem[HW20]{HW}
Mason Haberle and Jane Wang.
\newblock A full study of the dynamics on one-holed dilation tori, 2020.

\bibitem[Lei04]{L}
Christopher~J. Leininger.
\newblock On groups generated by two positive multi-twists: {T}eichm\"{u}ller
  curves and {L}ehmer's number.
\newblock {\em Geom. Topol.}, 8:1301--1359, 2004.

\bibitem[LMW16]{LMW}
Samuel Leli\`evre, Thierry Monteil, and Barak Weiss.
\newblock Everything is illuminated.
\newblock {\em Geom. Topol.}, 20(3):1737--1762, 2016.

\bibitem[M\"09]{Mo}
Martin M\"{o}ller.
\newblock Affine groups of flat surfaces.
\newblock In {\em Handbook of {T}eichm\"{u}ller theory. {V}ol. {II}}, volume~13
  of {\em IRMA Lect. Math. Theor. Phys.}, pages 369--387. Eur. Math. Soc.,
  Z\"{u}rich, 2009.

\bibitem[McM00]{M2}
Curtis~T. McMullen.
\newblock Polynomial invariants for fibered 3-manifolds and {T}eichm\"{u}ller
  geodesics for foliations.
\newblock {\em Ann. Sci. \'{E}cole Norm. Sup. (4)}, 33(4):519--560, 2000.

\bibitem[MMY10]{MMY}
S.~Marmi, P.~Moussa, and J.-C. Yoccoz.
\newblock Affine interval exchange maps with a wandering interval.
\newblock {\em Proc. Lond. Math. Soc. (3)}, 100(3):639--669, 2010.

\bibitem[Vee89]{V3}
W.~A. Veech.
\newblock Teichm\"{u}ller curves in moduli space, {E}isenstein series and an
  application to triangular billiards.
\newblock {\em Invent. Math.}, 97(3):553--583, 1989.

\bibitem[Vee93]{V4}
William~A. Veech.
\newblock Flat surfaces.
\newblock {\em Amer. J. Math.}, 115(3):589--689, 1993.

\bibitem[Wri15]{Wr}
Alex Wright.
\newblock Translation surfaces and their orbit closures: an introduction for a
  broad audience.
\newblock {\em EMS Surv. Math. Sci.}, 2(1):63--108, 2015.

\bibitem[Wri18]{Wr2}
Alex Wright.
\newblock Lecture notes on mirzakhani's work on earthquake flow, 2018.

\bibitem[Zor06]{Z}
Anton Zorich.
\newblock Flat surfaces.
\newblock In {\em Frontiers in number theory, physics, and geometry. {I}},
  pages 437--583. Springer, Berlin, 2006.

\end{thebibliography}

\end{document}